\documentclass[11pt]{article}

\usepackage[margin=30mm]{geometry}
\usepackage{amsmath,amssymb,amsthm,mathtools}
\usepackage{enumitem}
\usepackage[colorlinks=true,citecolor=blue,linkcolor=blue,urlcolor=blue]{hyperref}

\newtheorem{theorem}{Theorem}[section]
\newtheorem{lemma}[theorem]{Lemma}

\newtheorem{observation}[theorem]{Observation}

\newcommand{\VD}{\mathrm{VD}}
\newcommand{\ex}{\operatorname{ex}}
\newcommand{\extr}{\operatorname{ex}_{\mathrm{tr}}}
\newcommand{\pow}{\mathcal P}
\newcommand{\OO}{O}
\newcommand{\Om}{\Omega}

\title{Venn diagrams as forbidden hypergraph traces}
\author{Adam Džavoronok \footnote{Department of Applied Mathematics, Charles University, Faculty of Mathematics and Physics, Malostransk\'e n\'am.~25, 118 00~ Praha 1, Czech Republic, \texttt{adam.dzavoronok@mff.cuni.cz},\texttt{
timreizin@gmail.com}}
	\and
 Tymofii Reizin \footnotemark[1]
\and     
Jakub Šošovička \footnote{Department of Algebra, Charles University, Faculty of Mathematics and Physics
Sokolovsk\'a 83, 186 75 Praha 8, Czech Republic, \texttt{jakubsosovicka73@gmail.com}}
}

\begin{document}
\maketitle

\begin{abstract}
We study the maximum size of a set system that contains no $k$-Venn diagram, denoted by $\VD_k$, as a trace. For every fixed $k\ge 3$, we prove $\extr(n,\VD_k)=\OO_k(n^{2^k-2k+1})$, improving the direct Sauer-Shelah bound $\OO_k(n^{2^k-1})$.  In particular, for $k=4$ the exponent decreases from $15$ to $9$. The proof starts from the theorem of Keevash, Leader, Long and Wagner for $\VD_3$ and uses induction on $k$ in which two new Venn regions are forced for free at each added edge. We also record lower-bound constructions for Venn diagrams in fixed uniformity, explicit bounds for the $4$-uniform $3$-Venn problem, and a fixed uniformity trace result for the loose triangle.
\end{abstract}

\section{Introduction}

A $k$-Venn diagram is the incidence pattern formed by $k$ sets when all $2^k$ possible regions are non-empty. We are interested in how large a family of subsets of an $n$-element set can be if it contains no such configuration, even after passing to a trace. We use the following trace language. If $H$ is a hypergraph and $S\subseteq V(H)$, the trace of $H$ on $S$ is $H|_S=\{E\cap S:E\in E(H)\}$. We say that $H$ contains a hypergraph $F$ as a trace if $H|_S$ contains a subhypergraph isomorphic to $F$ for some $S\subseteq V(H)$. Let $\extr(n,F)$ denote the maximum number of edges in an $n$-vertex hypergraph with no trace copy of $F$. We lastly remark that, if we do not explicitly state the uniformity of the hypergraph $H$, we treat it as a set system $E(H)\subseteq\mathcal{P}(V(H))$.

The $k$-Venn diagram, which we denote as $\VD_k$, will be encoded as a $k$-edge hypergraph. Its vertex set is $2^{[k]}$, and its $i$-th edge is $E_i=\{A\subseteq [k]:i\in A\}$. Thus, a vertex $A\subseteq [k]$ represents the region contained in exactly the Venn sets whose indices lie in $A$. We write $\VD_k^\circ$ for the hypergraph obtained from $\VD_k$ by deleting the outside vertex $\emptyset$.

This is also a matrix forbidden-configuration problem. If $\mathcal F\subseteq \mathcal P([n])$, we view $\mathcal F$ as the $0$-$1$ incidence matrix with rows indexed by elements of $[n]$ and columns indexed by sets in $\mathcal F$. Then a $k$-Venn diagram in $\mathcal F$ is exactly a copy, up to row and column permutations, of the $2^k\times k$ matrix whose rows are all binary vectors of length $k$. Equivalently, we say that the dual VC~-~dimension of $\mathcal F$ is at least $k$. This viewpoint was introduced by Keevash, Leader, Long and Wagner in their work on the case $k=3$ \cite{KLLW}.

The Sauer–Perles–Shelah–Vapnik–Chervonenkis theorem \cite{Sauer,PerlesShelah,VC} yields the immediate bound\(\extr(n,\VD_k)=\OO_k(n^{2^k-1}).\)
Indeed, if a family shatters a set of size $2^k$, then $k$ suitably chosen members realize all $2^k$ incidence patterns and hence form a trace copy of \(\VD_k\). Thus every \(\VD_k\)-free family has VC-dimension at most $2^k-1$. The opposite construction is also simple: take all subsets of $[n]$ of size smaller than $2^{k-1}$. No member of this family can play the role of an edge of a $k$-Venn diagram, since every edge of $\VD_k$ has size $2^{k-1}$. Hence $\extr(n,\VD_k)=\Om_k(n^{2^{k-1}-1})$.

The gap between these two bounds is already non-trivial for $k=3$, whereas the case $k=2$ is a folklore problem of forbidding two crossing sets and has a linear extremal number. For $k=3$, Gupta, Lee and Li proved an upper bound of order $n^{3.75}$ as part of their work on minimum $k$-cuts \cite{GuptaLeeLi}. Keevash, Leader, Long and Wagner then proved the sharp estimate $\extr(n,\VD_3)=\Theta(n^3)$ \cite{KLLW}.

The conjectural general picture is supplied by the Anstee–Sali framework for forbidden configurations. For a fixed forbidden matrix $M$, the Anstee–Sali conjecture predicts that the extremal number is governed by a product construction using singleton, cosingleton, and chain factors; see the survey \cite{AnsteeSaliSurvey}. The relevant parameter is hard to compute in general \cite{Raggi}, but for the Venn matrices above the conjecture predicts precisely the lower-bound exponent $2^{k-1}-1$. Thus the expected answer is $\extr(n,\VD_k)=\Theta_k(n^{2^{k-1}-1})$. Our main result gives a polynomial improvement over the direct Sauer–Shelah bound for every fixed $k$.

\begin{theorem}[General trace bound]\label{thm:main-trace}
For every fixed integer $k\ge 3$, we have $\extr(n,\VD_k)=\OO_k(n^{2^k-2k+1})$.
\end{theorem}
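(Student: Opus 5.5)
We argue by induction on $k$, with the base case $k=3$ given by the theorem of Keevash, Leader, Long and Wagner, $\extr(n,\VD_3)=\OO(n^3)$. This matches the claimed exponent, since $2^3-2\cdot 3+1=3$. For the inductive step we want
\[
\extr(n,\VD_{k+1})=\OO_k\bigl(n^{2}\cdot n^{2^{k}-2}\cdot \extr(n,\VD_k)/n^{2^k-2k+1}\cdots\bigr),
\]
or more concretely
\[
\extr(n,\VD_{k+1})\le \OO_k\bigl(n^{2^k-2}\bigr)\cdot\extr(n,\VD_k).
\]
This is consistent with the exponents, because $(2^{k+1}-2(k+1)+1)-(2^k-2k+1)=2^k-2$. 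Passing from $\VD_k$ to $\VD_{k+1}$ doubles the regions, adding $2^k$ new ones. The plan is to pay a factor $n$ for each new region except two, which are obtained for free.

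The mechanism is a standard Sauer--Shelah-type \emph{shifting/restriction} argument. Let $\mathcal F\subseteq\pow([n])$ be $\VD_{k+1}$-free. For a set $T$ of $2^k-2$ elements and a pattern $\sigma\in\{0,1\}^T$, consider the subfamily $\mathcal F_{T,\sigma}$ of sets whose trace on $T$ is $\sigma$. Fix a suitable bijection between $T$ and the $2^k$ regions of a $k$-Venn diagram minus two special regions. We would like to show that, for an appropriate choice of $T$ and $\sigma$, the family $\mathcal F_{T,\sigma}$ restricted to $[n]\setminus T$ contains no $\VD_k$, whereas the sets of $\mathcal F$ not captured in this way number at most $\OO(n^{2^k-2})$ times something controlled.

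A cleaner implementation would use the Frankl/Pajor style inequality: a family of size $m$ has at least $m$ sets that it shatters ("strongly traces"). Alternatively, and more in line with the abstract, we could take the last edge $E_{k+1}$ to be chosen from $\mathcal F$ and demand that the first $k$ edges realize a $\VD_k$ both inside $E_{k+1}$ and outside it. For a fixed candidate set $F$, the new regions are the $2^k$ regions split by $F$. To build a $\VD_{k+1}$ from a $\VD_k$ in $\mathcal F$ together with $F$, we need each region of the $\VD_k$ to contain a point in $F$ and a point outside $F$.

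The key step, which is where the two free regions come from, is the following. Consider the $\VD_k$ given by $k$ members $G_1,\dots,G_k$. The region $\bigcap G_i$ and the outside region $\emptyset$ can be split for free: after adding a point or complementing via the trace structure, the largest and smallest regions will automatically meet both $F$ and its complement, provided $F$ is chosen from a suitable "middle" part of the family. For example, we might order $\mathcal F$ so that $F$ neither contains nor is disjoint from an appropriate witness. The remaining $2^k-2$ regions each need a separating point, and we record these points in a set $T$ of size $2^k-2$.

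The counting then runs as follows. Encode every $F\in\mathcal F$ by the pair consisting of $T$ (at most $n^{2^k-2}$ choices) and an element of a $\VD_k$-free family. This gives $|\mathcal F|\le \OO_k(n^{2^k-2})\,\extr(n,\VD_k)$, using a deletion argument in the spirit of the Sauer--Shelah proof: repeatedly remove an element, charge the sets that become identical, and apply the inductive hypothesis to the charged subfamily.

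The main obstacle is making the free splitting of the two extreme regions rigorous, that is, guaranteeing that $\emptyset$ and $[k]$ are split without spending a coordinate of $T$. Our first attempt would be to choose the charged subfamily so that every set in it has both a "twin" containing it and a twin avoiding a common element. This could be done via a two-step shifting on $x\notin F$ and $y\in F$, which supplies exactly the two required witness points.
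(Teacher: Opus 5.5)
\textbf{There is a genuine gap.} Your exponent bookkeeping is correct: $q_{k+1}-q_k=2^k-2$, where $q_k=2^k-2k+1$. The Sauer--Shelah deletion you invoke is also exactly the paper's trace-extension lemma, which buys one prescribed region per factor of $n$. But the whole content of the theorem is the two free regions, and that step is not proved.

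The ``middle part of the family'' and the two-step shifting are not arguments. The encoding of $F$ by $T$ and ``an element of a $\VD_k$-free family'' is never made into an injection, and there is no reason some $\mathcal F_{T,\sigma}$ should be $\VD_k$-free.

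Worse, the mechanism you propose fails for $k=3$, which is exactly the step giving exponent $9$. Your mechanism is a trace $\VD_k$ on $G_1,\dots,G_k$ plus one member $F$ splitting both $R_{[k]}$ and $R_\emptyset$, forced at the same exponent $q_k$ as $\VD_k$ itself; that is what ``for free'' must mean in your count. Take $\mathcal F=\binom{[n]}{\le 4}$, which has $\Theta(n^4)\gg n^{q_3}=n^3$ members. Suppose $G_1,G_2,G_3\in\mathcal F$ form a trace $\VD_3$. Each $G_i$ contains the four disjoint non-empty regions $R_I$ with $i\in I$ and has at most four elements, so every non-outside region is a singleton. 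No further member can split any of them. Hence a $\VD_3$ plus one more member realizes at most $(2^3-1)+0+1=8$ non-outside regions in this family. So no scheme of the form \emph{first find a $\VD_k$, then a set splitting two of its regions} can produce the two free regions at $k=3$, whichever regions you target.

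What is missing is the paper's region-splitting step. Write $s(B)$ for the number of non-outside regions that $B$ splits, and $\eta(B)=1$ if $B$ meets $R_\emptyset$.

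\begin{itemize}
\item \emph{Weaker target.} One only asks for $k+1$ members realising at least $2^k+1$ non-outside regions, without prescribing which. This suffices because trace extension is run over all labelled restrictions $\mathcal G_s$ of $\VD_{k+1}^\circ$.
\item \emph{Outside region.} It is added at the end by fixing a vertex and complementing, using that $\VD_{k+1}$ is self-complementary.
\item \emph{Critical size.} Sets of size $2^{k-1}$ are handled separately. For $k\ge4$ that layer has only $\OO_k(n^{q_k})$ members. For $k=3$, a loose $C_4^3$ in a vertex link (F\"uredi--Jiang) gives four $4$-sets with $9$ non-outside regions, no three of which form a $\VD_3$. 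So the configuration forced there is not of your type at all.
\item \emph{Main case.} Otherwise, take a $\VD_k$ among non-critical members, so some non-outside region has at least two points, and argue by contradiction with induction on $n$. Every $B$ has $s(B)+\eta(B)\le 1$. Members splitting a region therefore live on $[n]\setminus R_\emptyset$. The others are constant on the non-outside regions and compress injectively onto $r+2^k-1\le n-1$ points, where $r=|R_\emptyset|$. Convexity of $(n-r)^{q_k}+(r+2^k-1)^{q_k}$ closes the induction.
\end{itemize}
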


 The improvement is already substantial in the first open case: for $k=4$, the direct bound is $O(n^{15})$, whereas our result gives $O(n^9)$ and the conjectured exponent is $7$. The proof starts from the theorem of Keevash, Leader, Long and Wagner for $\VD_3$~\cite{KLLW} and then proceeds by induction. The key step is to show that, once a $\VD_k$ trace can be forced, the same exponent already forces $(k+1)$ sets which realize $2^k+1$ non-outside regions of $\VD_{k+1}$ . Compared with the full non-outside $k$-Venn diagram, this gives two additional regions for free. The remaining regions are then added one at a time by a standard trace-extension argument. 
We also record two fixed-uniformity results. Let $\ex_r(n,F)$ be the maximum number of edges in an $r$-uniform $n$-vertex hypergraph containing no copy of $F$ as a subhypergraph.

\begin{theorem}[Uniform lower bound]\label{thm:uniform-lower}
For every fixed $k\ge 2$, $\ex_{2^{k-1}}(n,\VD_k^\circ)=\Om_k(n^{2^{k-1}-1})$. Moreover, for every fixed $r\ge 2^{k-1}$ there are $r$-uniform hypergraphs with $\Om_{k,r}(n^{2^{k-1}-1})$ edges which avoid $\VD_k$ as a trace.
\end{theorem}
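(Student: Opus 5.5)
The plan is to construct, for $m=2^{k-1}$, an $m$-uniform hypergraph $G$ with $\Om_k(n^{m-1})$ edges and no copy of $\VD_k^\circ$, and then to pad its edges to reach uniformity $r$ without creating a trace copy of $\VD_k$.

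\emph{The $m$-uniform family.} I would use a partite algebraic design. Let $p\approx n/m$ be a prime, take disjoint copies $V_1,\dots,V_m$ of $\mathbb Z_p$ as parts, and let the edges be the transversals $\{x_1,\dots,x_m\}$ with $x_j\in V_j$ and $\sum_j c_jx_j\equiv 0\pmod p$, for coefficients $c_j$ chosen below. Every choice of $x_1,\dots,x_{m-1}$ extends uniquely, so there are $p^{m-1}=\Om_k(n^{m-1})$ edges. Suppose a copy of $\VD_k^\circ$ is present. Each edge is rainbow with respect to the parts, so the embedding induces a colouring $\chi:2^{[k]}\setminus\{\emptyset\}\to[m]$ that is injective on every $E_i$. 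It also gives values $x_A\in\mathbb Z_p$ satisfying the $k$ linear equations $\sum_{A\ni i}c_{\chi(A)}x_A=0$, with $x_A\neq x_B$ whenever $\chi(A)=\chi(B)$ and $A\neq B$.

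The hard step is to choose the $c_j$ so that, for every admissible colouring $\chi$ (there are only $O_k(1)$ of them), this system has no solution with distinct vertices. This does not follow from a dimension count, since we have only $k$ equations in $2^k-1$ unknowns. I would therefore switch to a Sidon-type choice: place $x_A$ in a Sidon-like set, or replace the sum by a more rigid relation such as requiring the last coordinate to be a generic function of the others. Combining the equations for $E_i$ and $E_j$ then forces an identity among the regions in $E_i\triangle E_j$, of the form $\sum_{A\in E_i\setminus E_j}=\sum_{A\in E_j\setminus E_i}$. A $B_h$-set condition, with $h=2^{k-2}$, together with $\chi$ would then force these two multisets to coincide, contradicting disjointness. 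To keep $p^{m-1}$ edges I would apply the $B_h$ condition only in the last coordinate, choosing that coordinate from a dense $B_h$-compatible encoding.

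\emph{Trace version and padding.} For $r=m$, a trace copy of $\VD_k$ in an $m$-uniform family must use the whole of each edge. Every edge of $\VD_k$ has exactly $m$ non-outside regions, so each trace edge $E_i\cap S$ must equal the full edge. The trace copy is therefore a subhypergraph copy of $\VD_k^\circ$, and the first part settles this case. For $r>m$, I would add a disjoint set $Y$ of $n$ fresh vertices and give each edge $e\in G$ a padding set $P_e\subseteq Y$ of size $r-m$. The padding sets should come from a family in which any $k$ of them have a common intersection pattern realizing no new region other than $[k]$; one option is a single fixed $P$, another is a near-sunflower structure.

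If $P$ is fixed, padding vertices in $S$ can only represent the region $[k]$. Each $E_i\cap S\cap X$ must then contain the $m-1$ regions of $E_i\setminus\{[k]\}$, so I need $G$ to avoid, as a trace, $\VD_k$ with the region $[k]$ deleted. I would verify this by rerunning the linear-algebra argument with $x_{[k]}$ free, which is harmless because the symmetric-difference identities do not involve $[k]$. I expect the main obstacle to be the rigidity step in the first part, namely ruling out all colourings $\chi$ simultaneously. The padding reduction should be routine once that step is done.
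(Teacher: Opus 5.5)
Your proposal has a genuine gap: the dense $2^{k-1}$-uniform $\VD_k^\circ$-free family is never constructed, and the route you sketch cannot produce one. With all $c_j\neq0$, the linear design \emph{always} contains $\VD_k^\circ$ for $k\ge3$. Put each complementary pair $\{A,[k]\setminus A\}$ in its own part and $[k]$ in the remaining part; this is an admissible $\chi$. For a chosen member $A$ of each pair set $w_A=c_{\chi(A)}\bigl(x_A-x_{[k]\setminus A}\bigr)$. The equation of $E_i$ then becomes $\sum w_A=K$, summed over chosen members containing $i$, where $K$ is a constant common to all $i$ that you can prescribe through $x_{[k]}$. For $k\ge3$ the singleton pairs $\{\{i\},[k]\setminus\{i\}\}$ are $k$ distinct pairs. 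Choose $\{i\}$ from each of them, put $w_A=1$ on every other chosen member, and set $w_{\{i\}}=K-N_i$, where $N_i$ counts the other chosen members containing $i$ and $K\notin\{N_1,\dots,N_k\}$. Every $w_A$ is then nonzero, so all vertices are distinct and you get a copy of $\VD_k^\circ$. For $k=3$ this is simply $c_1(x_1-x_{23})=c_2(x_2-x_{13})=c_3(x_3-x_{12})\neq0$. A random last coordinate is no better: it yields about $p^{2^k-1-k}$ copies in expectation. The $B_h$ repair also fails. Forcing equal $h$-fold sums to come from equal multisets requires the labels to lie in a $B_h$-set, which has size $O(p^{1/h})$ and destroys the $p^{m-1}$ count. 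A condition on the last coordinate alone does nothing, because the identity coming from $E_i$ and $E_j$ involves coordinates in $2^{k-2}\ge2$ different parts.

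Your reduction of the trace problem to $\VD_k^\circ$ when $r=2^{k-1}$ is correct. The padding step, however, is not routine, and with a fixed $P$ it is impossible already for $k=3$. If $G$ is $4$-uniform with $\omega(n^2)$ edges, Theorem~\ref{thm:loose-triangle-trace} gives three edges whose traces on some $6$-set are $\{x,y,a\},\{x,z,b\},\{y,z,c\}$. Adding a vertex of $P$ (region $[3]$) and a vertex missing all three padded edges (region $\emptyset$) produces a trace copy of $\VD_3$. So letting $x_{[k]}$ be free is far from harmless, and fixed padding caps you at $O(n^2)$ edges instead of $n^3$.

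The missing idea is the twin obstruction used in the paper: all vertices of $\VD_k$ have distinct incidence patterns. Take edges $A\cup\{p,p'\}$ with $A\in\binom{X}{2^{k-1}-2}$ and $\{p,p'\}$ from a fixed matching on a disjoint set $Y$. Every edge contains both or neither of $p,p'$, so they are twins and no $\VD_k^\circ$ can appear. For uniformity $r$, enlarge the matched pairs to disjoint blocks of size $r-2^{k-1}+2$ instead of adding a common set. A trace edge of $\VD_k$ has $2^{k-1}$ vertices, but only $2^{k-1}-2$ can come from $X$, so two twins from its block lie in the trace. Incidentally, your linear design with exactly two nonzero coefficients, $c_{m-1}x_{m-1}+c_mx_m=0$, is precisely a partite version of this construction.
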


For the first non-trivial uniform case we obtain the following explicit estimates.

\begin{theorem}[The $4$-uniform $3$-Venn problem]\label{thm:four-uniform}
For every \(n\ge 8\),
\[
\sum_{i=1}^{\lfloor n/2\rfloor}\binom{n-2i}{2}\leq \ex_4(n,\VD_3^\circ)\le \frac{n}{4}\binom{n-2}{2}.
\]
\end{theorem}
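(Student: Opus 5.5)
The upper bound comes from the links of single vertices and a known Turán-type result for the loose triangle. The lower bound is an explicit ordered-pairs construction.

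First we unpack $\VD_3^\circ$ in the $4$-uniform setting. It consists of three $4$-sets $A,B,C$ on $7$ vertices such that $|A\cap B\cap C|=1$, each pairwise intersection has size exactly $2$, and each edge has exactly one private vertex.

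\emph{Upper bound.} Let $H$ be a $4$-uniform $n$-vertex hypergraph with no copy of $\VD_3^\circ$. For a vertex $v$, consider its link $L_v=\{E\setminus\{v\}:v\in E\in E(H)\}$, a $3$-uniform hypergraph on the $n-1$ vertices $V(H)\setminus\{v\}$.

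Suppose $L_v$ contains a loose triangle, that is, three triples $T_1,T_2,T_3$ which pairwise meet in exactly one vertex, with the three meeting vertices distinct. Then $T_1\cup\{v\},T_2\cup\{v\},T_3\cup\{v\}$ are edges of $H$ whose common vertex is exactly $v$. Any two of them share $v$ together with one further vertex, and each has one private vertex. So they form a copy of $\VD_3^\circ$ with $v$ as the central region, a contradiction. Hence every link $L_v$ is loose-triangle-free.

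By the theorem of Frankl and Füredi (see also Csákány and Kahn), a loose-triangle-free $3$-graph on $m\ge 6$ vertices has at most $\binom{m-1}{2}$ edges; this is an outside result, not one proved earlier in this paper. Apply it with $m=n-1\ge 7$, which is where $n\ge 8$ is used. This gives $\deg(v)=|L_v|\le\binom{n-2}{2}$ for every $v$. Double counting vertex–edge incidences then yields
\[
4|E(H)|=\sum_v\deg(v)\le n\binom{n-2}{2},
\]
which is the claimed upper bound.

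\emph{Lower bound.} Order the vertices as $v_1,\dots,v_n$ and set $p_i=\{v_{2i-1},v_{2i}\}$ for $1\le i\le\lfloor n/2\rfloor$. The edges are all sets $p_i\cup\{x,y\}$ with $x,y\in\{v_{2i+1},\dots,v_n\}$. Each edge determines its index $i$ uniquely: it is the smallest index $t$ with $v_t$ in the edge, rounded to its pair. Distinct choices therefore give distinct edges, and the edge count is $\sum_i\binom{n-2i}{2}$.

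Call $p_i$ the leading pair of such an edge. Note that an edge with leading pair $p_j$ contains no vertex of $v_1,\dots,v_{2j}$ outside $p_j$. Suppose edges $A,B,C$ formed $\VD_3^\circ$, and let $A$ be one whose leading index $i$ is minimal.

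\emph{The index $i$ is shared.} If both $B$ and $C$ had leading index greater than $i$, both would miss $p_i$. Then $A\cap(B\cup C)\subseteq A\setminus p_i$, which has size $2$. But in $\VD_3^\circ$ the set $A\cap(B\cup C)$ has size $3$ (the $AB$-region, the $AC$-region and the centre). So some other edge, say $B$, also has leading pair $p_i$. Then $p_i\subseteq A\cap B$, and since $|A\cap B|=2$ we get $A\cap B=p_i$.

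\emph{The edge $C$ cannot fit.} The centre lies in $A\cap B=p_i$, so $C$ contains exactly one vertex of $p_i$: otherwise $|A\cap B\cap C|$ would be $0$ or $2$. But by minimality $C$ has leading index $j\ge i$. If $j=i$ then $C\supseteq p_i$, and if $j>i$ then $C\cap p_i=\emptyset$. Either way we have a contradiction, so the construction is $\VD_3^\circ$-free.

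The only nontrivial ingredient is the loose-triangle extremal theorem, together with its range of validity; this is what forces the assumption $n\ge 8$. The rest is routine.
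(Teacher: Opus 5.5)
Your proposal is correct and follows the paper's proof: the same link argument with the Frankl--F\"uredi/Cs\'ak\'any--Kahn bound for $C_3^3$ (your double count is equivalent to the paper's averaging step), and the same construction that exposes the vertices in disjoint pairs. For freeness, the paper argues more briefly that the two vertices of the earliest exposed pair have identical incidence patterns with respect to the three edges, which is impossible in $\VD_3^\circ$; your case analysis via $|A\cap(B\cup C)|=3$ and the centre is a longer version of the same argument.
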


We also study the loose triangle $C_3^3$, a natural subconfiguration of $\mathrm{VD}_3^\circ$ obtained by removing the vertex in all three edges. Its trace problem provides a simple example in which the unrestricted and fixed-uniformity extremal numbers have different orders.

\begin{theorem}[Fixed-uniformity trace problem for the loose triangle]\label{thm:loose-triangle-trace}
For every fixed integer $r\ge 3$, the maximum number of edges in an $r$-uniform $n$-vertex hypergraph with no trace copy of $C_3^3$ is $\Theta_r(n^2)$.
\end{theorem}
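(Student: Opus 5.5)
The lower bound comes from an explicit construction and the upper bound from a deletion argument combined with a sunflower (Erd\H{o}s--Rado) analysis. I have only checked the construction. The upper-bound plan is incomplete: I do not yet know how to close its main case. Throughout, a trace copy of $C_3^3$ means edges $F_1,F_2,F_3$ such that every region $F_i\setminus(F_j\cup F_k)$ and every region $(F_i\cap F_j)\setminus F_k$ is non-empty. Such edges give the six vertices; the common vertex of $\VD_3^\circ$ is not needed.

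\emph{Lower bound.} Fix an $(r-2)$-set $C\subseteq[n]$. Take all edges $C\cup\{u,v\}$ with $u,v\notin C$. This gives $\binom{n-r+2}{2}=\Om_r(n^2)$ edges. Any three edges meet $V\setminus C$ in pairs, and a trace copy of $C_3^3$ needs at least three vertices of $F_1$ outside $C$: one private vertex, one in $F_1\cap F_2$ and one in $F_1\cap F_3$. Vertices of $C$ lie in all three edges, so they cannot serve. Hence there is no trace copy of $C_3^3$.

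\emph{Upper bound.} Let $H$ be $r$-uniform with no trace $C_3^3$, and let $D=D(r)$ be a large constant.
\begin{enumerate}[label=(\roman*)]
\item Repeatedly delete an edge containing a pair $\{p,q\}$ of current codegree at most $D$. Each pair is charged at most $D$ times, so at most $D\binom n2$ edges are deleted.
\item Show that the remaining hypergraph $H'$, in which every pair inside an edge has codegree $>D$, is empty. This gives $|H|\le D\binom n2$.
\item Suppose $H'$ has an edge $E$, and fix three vertices $x,p,q\in E$. The $>D$ edges through $\{p,q\}$ and through $\{p,x\}$ contain, by the sunflower lemma, sunflowers with $r+3$ petals. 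Call their kernels $K_{pq}$ and $K_{px}$.
\item Aim to choose $F_1=E$, an edge $F_2\ni p$ with $q\notin F_2$, and an edge $F_3\ni q$ with $p\notin F_3$. We need $F_2\cap F_3$ to contain a vertex outside $E$, and $x\in E\setminus(F_2\cup F_3)$. Sunflower petals are pairwise disjoint, and there are more than $r$ of them. So we can always discard petals meeting a given bounded set; this keeps the private vertices and forces the required non-incidences.
\end{enumerate}

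\emph{Main obstacle.} The difficulty is when every edge through $p$ near $E$ also contains $q$, that is, the kernels are large. The extreme case has kernels of size $r-1$ and single-vertex petals, as in the construction above. Then all edges through a pair share a common $(r-1)$-core, and the high-codegree condition propagates along $E$.

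First attempt: choose $\{p,q\}\subseteq E$ with the kernel $K_{pq}$ inclusion-minimal. Then try to pass to a second edge $E'=K_{pq}\cup\{y\}$ with $y$ a petal vertex. Since $y$ is not in the core, $E'$ has a pair $\{y,p\}$ of high codegree whose kernel $K_{yp}$ is not contained in $K_{pq}$. Combining the sunflower through $\{p,q\}$ with the one through $\{y,p\}$ should then yield $F_2$ and $F_3$ as above.

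If this direct route fails, the fallback is to strengthen step (i). Delete edges containing any pair whose link sunflower has a kernel other than the pair itself. The point is to show that the number of such deleted edges is still $O_r(n^2)$, and that after this deletion the configuration in step (iv) can be found. I have not verified either claim; bounding that deletion count is the crux.
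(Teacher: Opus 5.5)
There is a genuine gap: the upper bound is not proved. Your lower bound is correct and is the paper's construction. Your pruning step (i) is the paper's reduction to a hypergraph in which every pair lying in an edge has large codegree. The gap is step (ii), which you leave open, and the plan in (iii)--(iv) cannot close it as set up.

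The sunflowers you extract consist of edges containing $\{p,q\}$ or $\{p,x\}$. But $F_2$ must avoid $q$ and $x$, and $F_3$ must avoid $p$ and $x$, so no choice of petals produces $F_2$ or $F_3$. High codegree of a pair never by itself supplies edges through one vertex that avoid another; that is exactly your ``main obstacle''. Fixing $F_1=E$ also forces you to find a vertex $y\notin E$ with many edges through $\{p,y\}$ avoiding $q$ and many through $\{q,y\}$ avoiding $p$, and nothing in the plan produces such a $y$. The fallback deletion rule is not well defined, since large sunflowers are not canonical, and its $O_r(n^2)$ count is unsupported.

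The missing idea is to prescribe the three degree-two vertices of the triangle rather than one of its edges. Use codegree threshold $C=D(r-2)+2$ in step (i), with $D$ large enough for the sunflower step below. Then every remaining edge $A$ contains distinct $x,y,z$ such that each of the three families of edges containing exactly two of $x,y,z$ has at least $D$ members.

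Suppose not, so every triple in $A$ has such a family of size less than $D$. Show by induction on $\ell\ge 2$ that every $\ell$-subset of $A$ lies in at least $C-D(\ell-2)$ edges. For an $(\ell+1)$-subset $B\subseteq A$, choose $a_1,a_2,a_3\in B$ with fewer than $D$ edges containing $a_1,a_2$ but not $a_3$. Then all but fewer than $D$ of the edges through $B\setminus\{a_3\}$ contain $B$. At $\ell=r$ the $r$-set $A$ lies in two distinct edges, which is absurd. This is the quantitative form of your remark that high codegree propagates along $E$; pushed to $\ell=r$ it becomes a contradiction rather than an obstacle.

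With the triple in hand, apply the sunflower lemma to the three families with the fixed pair deleted, so to sets of size $r-2$. Discard petals as you describe in (iv), choosing pairwise disjoint petals each avoiding the other two cores. This gives private vertices $a,b,c$, and the traces on $\{x,y,z,a,b,c\}$ are $\{x,y,a\}$, $\{x,z,b\}$, $\{y,z,c\}$. None of the three edges is $A$ itself.
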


The paper is organised as follows. Section~\ref{sec:trace} proves the main trace bound. Section~\ref{sec:uniform} gives the fixed-uniformity Venn constructions and proves Theorem~\ref{thm:four-uniform}. Section~\ref{sec:loose} proves Theorem~\ref{thm:loose-triangle-trace}.

\section{The trace bound}\label{sec:trace}

Throughout this section, containment of a fixed hypergraph with labeled edges as a trace means that the selected members of the ambient family realize the prescribed incidences with those labeled edges.

For sets $A_1,\ldots,A_t\subseteq [n]$ and $I\subseteq [t]$, put
\[
R_I(A_1,\ldots,A_t)=\left(\bigcap_{i\in I}A_i\right)\cap\left(\bigcap_{i\in [t]\setminus I}([n]\setminus A_i)\right).
\]
We say that $A_1,\ldots,A_t$ realize $m$ non-outside regions if at least $m$ of the regions $R_I(A_1,\ldots,A_t)$ with $I\ne\emptyset$ are non-empty. In this section and later on, we work with the notion of a loose cycle, which we now define. For \(r\ge 2\) and \(\ell\ge 3\), the \(r\)-uniform loose cycle \(C_\ell^r\) consists of edges
\(T_1,\ldots,T_\ell\) and distinct vertices \(v_1,\ldots,v_\ell\) such that
\(
T_i\cap T_{i+1}=\{v_i\}
\)
with indices modulo \(\ell\), and all other intersections are empty. In particular, \(C_3^3\) has six vertices and an empty triple intersection. Moreover, we use the following extremal result as a black box.
\begin{theorem}[Füredi,Jiang \cite{FurediJiang}]\label{thm:loose cycle r,l}
    For fixed $r\ge3$ and $\ell\ge3$ $$\ex_r(n,C_\ell^r)=\OO_{r,\ell}(n^{r-1}).$$
\end{theorem}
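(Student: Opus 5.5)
Since the paper uses Theorem~\ref{thm:loose cycle r,l} as a black box, what follows is only a plan for how I would reprove it, with the cycle-finding step left open.

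The approach is the Frankl--Füredi $\Delta$-system method: reduce to a very regular subfamily, then embed $C_\ell^r$ greedily. Let $H$ be $r$-uniform on $[n]$ with $|E(H)|\ge C\,n^{r-1}$, where $C=C(r,\ell)$ is large.

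First, I would apply Füredi's intersection-semilattice lemma with sunflower parameter $s=r\ell+1$. This gives a subfamily $H^*\subseteq H$ with $|E(H^*)|\ge c_r|E(H)|$ that is $r$-partite with parts $V_1,\dots,V_r$. It also gives a fixed family $\mathcal J\subsetneq 2^{[r]}$, closed under intersection, with two properties. For every edge $e\in H^*$, the set of projected intersections $\{e\cap f: f\in H^*\}$ is exactly $\mathcal J$. And every $e\cap f$ is the kernel of a sunflower in $H^*$ with at least $s$ petals.

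Second, I would use the standard counting fact. If some $(r-1)$-subset of $[r]$ is missing from $\mathcal J$, then every edge of $H^*$ has an "own" $(r-1)$-subset that lies in no other edge, so $|E(H^*)|\le\binom{n}{r-1}$. Choosing $C$ large rules this out. Hence $\mathcal J$ contains all $(r-1)$-subsets, and by closure under intersection it contains every proper subset of $[r]$. In particular, every pair $\{u,w\}$ lying in an edge of $H^*$ is the kernel of a sunflower with more than $r\ell$ petals.

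Third, the expansion step. Let $G=\partial_2H^*$ be the $2$-shadow of $H^*$. Suppose $G$ contains a graph cycle $u_1u_2\cdots u_\ell$. Process the cycle edges $u_iu_{i+1}$ one at a time. Because petals of a sunflower are pairwise disjoint and fewer than $r\ell$ vertices are ever used, each time some petal of the sunflower at $\{u_i,u_{i+1}\}$ avoids all previously used vertices and all other $u_j$. The chosen edges then form a copy of $C_\ell^r$ with $v_i=u_{i+1}$, as required by the definition of a loose cycle.

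So everything reduces to finding $C_\ell$ in $G$, and I expect this to be the main obstacle.

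- For $\ell\le r$ it is trivial, because each edge of $H^*$ spans a $K_r$ in $G$.
- For larger $\ell$, I would first show $G$ is dense. By Kruskal--Katona, $|E(G)|\ge c\,n^{1+\epsilon_r}$, which is superlinear. For even $\ell$, Bondy--Simonovits then gives $C_\ell$ in $G$.
- For odd $\ell$, I would exploit the fact that every edge of $G$ lies in a $K_r$ with $r\ge3$. The plan is to find a long even cycle and then splice in a triangle, using one $K_r$ and the sunflowers at singletons to reroute through disjoint petals. Alternatively, I would build a loose path inside the $r$-partite structure and close it through a common vertex class.

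Making this closing argument rigorous for odd $\ell$ is the step I would expect to need the most care. If it resists, I would fall back on the known kernel/"$\Delta$-system + linear cycle" argument from Füredi--Jiang directly.
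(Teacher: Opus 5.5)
The paper gives no proof of this statement. It is quoted from F\"uredi--Jiang and used only once, in Lemma~\ref{lem:critical-size}, with $r=3$ and $\ell=4$.

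Your reduction is sound: Füredi's intersection-structure lemma, then forcing every $(r-1)$-subset into $\mathcal J$ via the own-$(r-1)$-set count, then greedily expanding a cycle of $G=\partial_2H^*$ into $C_\ell^r$ through pair-kernel sunflowers with more than $r\ell$ petals. The genuine gap is the step you flag yourself: you never actually produce $C_\ell$ in $G$. For odd $\ell>r$ nothing is argued.

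For even $\ell$ the argument as written is wrong. Superlinearity is not enough for Bondy--Simonovits; you need $|E(G)|$ to be at least a large constant times $n^{1+2/\ell}$. Kruskal--Katona only gives order $n^{2-2/r}$. For $r=3$, $\ell=4$ this is $n^{4/3}$, against the $\Theta(n^{3/2})$ threshold for $C_4$. So your even case fails in exactly the instance the paper relies on.

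The missing idea is that you already have enough structure, so no Tur\'an-type input is needed. Every $(r-1)$-subset of every edge of $H^*$ is the kernel of an $s$-sunflower in $H^*$. By $r$-partiteness, its petals are distinct single vertices of the remaining class. So you can replace one coordinate at a time:
\begin{enumerate}
\item Fix $e_0=\{a_1,\dots,a_r\}\in H^*$ and put $K=\{a_4,\dots,a_r\}$ and $(w_1,w_2,w_3)=(a_1,a_2,a_3)$.
\item Given $\{w_{i-2},w_{i-1},w_i\}\cup K\in H^*$, use the sunflower with kernel $\{w_{i-1},w_i\}\cup K$ to choose an unused vertex $w_{i+1}$ in the class of $w_{i-2}$. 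This is possible since $s>\ell$.
\end{enumerate}
Then $\{w_j,w_{j+1},w_{j+2}\}\cup K\in H^*$ for all $j$. Hence $G$ contains the square of the path $w_1w_2\cdots w_\ell$. That graph is Hamiltonian: go $w_1w_3w_5\cdots$ up through the odd indices, then back down through the even indices to $w_2$, and close with $w_2w_1$. This gives $C_\ell\subseteq G$ for every $\ell\ge3$, independently of parity. Your expansion step then finishes the proof.
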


We briefly sketch the inductive structure of the proof of Theorem~\ref{thm:main-trace}.
The base case is the theorem of Keevash, Leader, Long and Wagner ~\cite{KLLW} for
\(\VD_3\). The induction uses two ingredients. First, we use the standard trace-extension argument (see \cite{AnsteeSaliSurvey,KLLW}) proved as Lemma~\ref{lem:trace-extension}, which states that once a fixed family of labeled trace configurations is forced at exponent \(q\), adding one prescribed trace vertex costs one further power of \(n\). Second, Lemma~\ref{lem:region-splitting} shows that if \(\VD_k\) is forced at exponent
\(q_k=2^k-2k+1\), then the same exponent already forces \(k+1\) sets realising
at least \(2^k+1\) non-outside regions. The key point is to find a trace copy
of \(\VD_k\) using sets of size different from \(2^{k-1}\), so that some
region has  at least two vertices in it.

This allows for an induction on the ground
set after separating sets that split an old region from those that are
constant on all old non-outside regions. Similar argument was also used by Keevash, Leader, Long and Wagner~\cite{KLLW} in their structural analysis for $\VD_3$. The only exceptional case is when many sets have the critical size
\(2^{k-1}\); this is handled separately in Lemma~\ref{lem:critical-size}.
After that, Lemma~\ref{lem:region-splitting} supplies \(2^{k-1}+1\) non-outside regions in the
step from \((k-1)\) to \(k\). The trace-extension Lemma~\ref{lem:trace-extension} adds the remaining
regions one by one, and the outside region is forced at the end.

\begin{lemma}\label{lem:trace-extension}
Let $\mathcal G$ and $\mathcal G^+$ be finite families of hypergraphs whose edges are indexed by the same finite label set. Suppose that for every $G\in\mathcal G$, some member of $\mathcal G^+$ is obtained from $G$ by adding one new vertex and prescribing, for each edge label, whether the new vertex belongs to that edge. If every family avoiding all members of $\mathcal G$ as traces has size $\OO(n^q)$, then every family avoiding all members of $\mathcal G^+$ as traces has size $\OO(n^{q+1})$.
\end{lemma}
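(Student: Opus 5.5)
The plan is to use the classical induction on the ground set, in the style of the proof of the Sauer--Shelah lemma and the standard forbidden-configuration arguments. Let $f^+(n)$ denote the maximum size of a family $\mathcal F\subseteq\pow([n])$ avoiding all members of $\mathcal G^+$ as traces. Fix the last element $n$ and split $\mathcal F$ according to the trace on $[n-1]$. Put $\mathcal F' = \{F\setminus\{n\}: F\in\mathcal F\}$, which is a family on $[n-1]$, and let $\mathcal D=\{A\subseteq[n-1]: A\in\mathcal F \text{ and } A\cup\{n\}\in\mathcal F\}$ be the family of ``doubled'' traces. Then
\[
|\mathcal F| = |\mathcal F'| + |\mathcal D|.
\]
The family $\mathcal F'$ is a trace of $\mathcal F$, so it still avoids every member of $\mathcal G^+$ as a trace. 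This gives $|\mathcal F'|\le f^+(n-1)$.

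The key claim is that $\mathcal D$ avoids every $G\in\mathcal G$ as a trace. Suppose, for contradiction, that some $S\subseteq[n-1]$ and sets $D_e\in\mathcal D$ (one for each edge label $e$ of $G$) realize $G$ on $S$. Let $G^+\in\mathcal G^+$ be the extension of $G$ guaranteed by the hypothesis, with its new vertex $x$ and its prescribed membership pattern. For each label $e$, choose $F_e=D_e\cup\{n\}$ if $x$ should belong to edge $e$, and $F_e=D_e$ otherwise. Both choices lie in $\mathcal F$ because $D_e\in\mathcal D$, and both have the same trace on $S$. Hence $F_e\cap(S\cup\{n\})$ realizes $G^+$, with $n$ playing the role of $x$. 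This contradicts the assumption on $\mathcal F$.

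Some care is needed with labeled edges. Distinct labels of $G$ correspond to distinct sets $D_e$ if $G$ has distinct edges; if edges may coincide as sets, the conventions still match, because the $F_e$ are chosen label by label. The same applies to the meaning of ``contains as a subhypergraph'' after taking the trace. With the claim in hand, the hypothesis gives $|\mathcal D|\le Cn^q$ for all $n$.

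Combining the two bounds yields the recursion $f^+(n)\le f^+(n-1)+Cn^q$. Since $f^+(0)\le 1$ (or any constant base), summing gives
\[
f^+(n)\le 1+C\sum_{m\le n} m^q = \OO(n^{q+1}).
\]
The only step requiring any thought is the claim that $\mathcal D$ is $\mathcal G$-free, specifically checking that the freedom to add or omit $n$ independently for each chosen set realizes the prescribed incidence pattern of the new vertex. Everything else is routine.
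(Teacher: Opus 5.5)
Your proposal is correct and follows essentially the paper's argument: delete one ground element, bound the projected family by $f^+(n-1)$, and show that the doubled projections avoid every $G\in\mathcal G$ by choosing, label by label, the lift with or without the deleted element. The only difference is cosmetic: you write $|\mathcal F|=|\mathcal F'|+|\mathcal D|$, whereas the paper writes $|\mathcal F_1|+2|\mathcal F_2|$ with $\mathcal F_1$ the single-lift sets, so your recurrence reads $f^+(n)\le f^+(n-1)+g(n-1)$ instead of $f^+(n-1)+2g(n-1)$.
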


\begin{proof}
Let $f(n)$ and $g(n)$ be the corresponding extremal functions for $\mathcal G^+$ and $\mathcal G$, respectively. Fix a vertex $x$ of the ground set and project each set $E$ to $E\setminus{x}$. Let $\mathcal F_1$ be the projected sets that have exactly one lift in the original family, and let $\mathcal F_2$ be the projected sets that have two lifts. The family $\mathcal F_1$ avoids every member of $\mathcal G^+$ as a trace, since any such trace would already occur away from $x$. The family $\mathcal F_2$ avoids every member of $\mathcal G$ as a trace. Indeed, if it contained a trace copy of some $G\in\mathcal G$, choose $G^+\in\mathcal G^+$ obtained from $G$ by adding a vertex with a prescribed incidence pattern. Since both lifts of every selected edge are available, we may choose the lift containing $x$ precisely for those labeled edges that contain the new vertex of $G^+$. The selected lifts, together with $x$, then give a trace copy of $G^+$, contradiction. Thus
\(
f(n)\le f(n-1)+2g(n-1).
\)
Iterating this recurrence and using $g(n)=\OO(n^q)$ gives
$f(n)=\OO(n^{q+1})$.
\end{proof}
\begin{lemma}\label{lem:critical-size}
Let $k\ge3$ and put $q_k=2^k-2k+1$. There is a constant $C=C(k)$ such that every family $\mathcal F\subseteq \binom{[n]}{2^{k-1}}$ of size at least $Cn^{q_k}$ contains $k+1$ members realising at least $2^k+1$ non-outside regions.
\end{lemma}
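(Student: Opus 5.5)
The plan is to exploit the fact that the critical size is \(r:=2^{k-1}\) and that \(q_k=2^k-2k+1\ge 2^{k-1}-1=r-1\) for all \(k\ge3\), since \(2^{k-1}\ge 2k-2\). So it suffices to show that an \(r\)-uniform family with at least \(Cn^{r-1}\) edges contains \(k+1\) members realising at least \(2^k+1\) non-outside regions. I would do this by exhibiting one fixed \(r\)-uniform hypergraph \(H\) on \(k+1\) labelled edges in which at least \(2^k+1\) vertices have pairwise distinct non-empty incidence patterns. I would then show \(\ex_r(n,H)=\OO(n^{r-1})\) by Füredi's delta-system method, in the same spirit as the black-box loose-cycle bound of Theorem~\ref{thm:loose cycle r,l}.

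\textbf{Choice of \(H\).} Each edge \(T_i\) (\(i\in[k+1]\)) gets one private vertex of degree one, which yields the \(k+1\) singleton regions \(\{i\}\). The remaining \(2^k-k\) required patterns are taken to be distinct subsets \(I\subseteq[k+1]\) with \(|I|\ge2\), each realised by exactly one vertex. Preferably these are pairs and triples, so that the total load on each edge stays at most \(r-1\). The counting is feasible: we need \(\sum_I |I|\le (k+1)(r-1)\) and each edge load at most \(r-1\). For \(k=3\) this means choosing \(4\) subsets of \([4]\) of size \(\ge2\) with every element covered at most \(3\) times, e.g.\ \(\{1,2\},\{3,4\},\{1,3\},\{2,4\}\). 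In general, spreading \(2^k-k-1\) pairs and triples evenly over \([k+1]\) is a routine check, since \(2(2^k-k-1)\le (k+1)(2^{k-1}-1)\). Any leftover capacity in an edge is filled with further degree-one vertices. Thus every edge of \(H\) contains a vertex of degree one, and \(H\) has the desired \(2^k+1\) distinct non-outside patterns, hence any copy of \(H\) in \(\mathcal F\) gives the required \(k+1\) members.

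\textbf{The extremal step.} Next I would show that every \(r\)-uniform \(H\) in which each edge has a vertex of degree one has \(\ex_r(n,H)=\OO_H(n^{r-1})\), a Frankl--Füredi type statement. First apply Füredi's intersection-structure theorem to get a subfamily \(\mathcal F^*\subseteq\mathcal F\) with \(|\mathcal F^*|\ge c(r,s)|\mathcal F|\) that is \(r\)-partite and homogeneous. In particular, there is a family \(\mathcal J\subseteq 2^{[r]}\), closed under intersection, such that for every \(E\in\mathcal F^*\) the traces \(\{E\cap E':E'\in\mathcal F^*\}\) are exactly the copies of \(\mathcal J\) inside \(E\), each being the kernel of a sunflower in \(\mathcal F^*\) with \(s\) petals (\(s\) large in terms of \(|V(H)|\)). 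If \(\mathcal J\) contained no \((r-1)\)-set, then every edge would be determined by one of its proper subsets among the maximal members of \(\mathcal J\) whose rank is at most \(r-2\). The standard counting then gives \(|\mathcal F^*|=\OO(n^{r-2})\), contradicting \(|\mathcal F|\ge Cn^{r-1}\) for \(C\) large. Hence \(\mathcal J\) contains some \((r-1)\)-set, and by closure under intersection it contains all subsets of that set \(D\subseteq[r]\).

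I would then embed \(H\) greedily. Order the vertices so that in each edge the degree-one vertex sits in the unique coordinate \([r]\setminus D\), and place all higher-degree vertices in coordinates of \(D\). Embed the edges one at a time: the part of \(T_i\) already fixed by earlier edges is a subset of \(D\)-coordinates of an existing member, hence a kernel in \(\mathcal J\) with \(s\) petals. Choose a petal avoiding the \(\le |V(H)|\) vertices used so far, and insert new vertices as needed. The main obstacle is exactly this embedding step. The fixed part of a new edge comes from several previously embedded edges, so it must be realised consistently as a single kernel in the correct coordinate positions, and this is where the \(r\)-partite homogeneity and the full subset lattice on \(D\) are used. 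If this bookkeeping fails, my fallback is to cite the Frankl--Füredi theorem directly, or to use the loose-cycle bound of Theorem~\ref{thm:loose cycle r,l} iteratively to grow the configuration.
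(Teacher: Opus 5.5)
There is a genuine gap, and it sits at the step you flagged yourself. The claim that every $r$-uniform $H$ in which each edge has a vertex of degree one satisfies $\ex_r(n,H)=\OO(n^{r-1})$ is false. Take the expansion of $K_{r+1}$, where each of its $\binom{r+1}{2}$ graph edges is enlarged by $r-2$ new vertices. It has this property, yet it does not occur in the complete balanced $r$-partite $r$-graph, which has $\Theta(n^r)$ edges: two of the $r+1$ core vertices share a part, so the expanded edge through them cannot be a host edge.

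The delta-system greedy embedding works only when each new edge meets the already embedded vertices inside a \emph{single} earlier edge (Frankl--F\"uredi forests). Homogeneity of $\mathcal F^*$ says nothing about whether vertices taken from two different earlier edges lie in a common member. Closing a cycle is exactly the hard content of Theorem~\ref{thm:loose cycle r,l}.

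No choice of $H$ rescues your framework for $k=3$. In such a forest, the graph on $[4]$ formed by the distinct pair patterns must be acyclic, since the last edge of a cycle would meet two different earlier edges. Then nine distinct patterns within $4\cdot4=16$ incidences force exactly $4$ singletons, $3$ pairs and $2$ triples. Up to relabelling this is one configuration, and in it no edge meets the union of the other three inside one of them.

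Separately, your count is off by one. You need $2^k+1-(k+1)=2^k-k$ patterns of size at least two, not $2^k-k-1$. Your $k=3$ example $\{1,2\},\{3,4\},\{1,3\},\{2,4\}$ is just the loose cycle $C_4^4$. Since filler vertices only repeat singleton patterns, it realises only $4+4=8$ non-outside regions, not $9$.

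Two smaller inaccuracies about $\mathcal J$: without an $(r-1)$-set the count gives $\binom{n}{r-1}$, not $\OO(n^{r-2})$, and intersection-closure alone does not yield all subsets of $D$. Both are fixable for large $C$ and are not the real issue.

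The missing ideas are simpler than this machinery.
\begin{itemize}
\item For $k\ge4$ one has $q_k\ge 2^{k-1}$, so the whole layer $\binom{[n]}{2^{k-1}}$ has fewer than $n^{q_k}$ members and the lemma is vacuous. Only $k=3$ needs an argument.
\item For $k=3$ the paper puts one vertex into all four sets, giving the pattern $[4]$. A family of $Cn^3$ four-sets has a vertex $x$ of degree at least $4Cn^2$. Its $3$-uniform link contains a loose $C_4^3$ with edges $T_1,\ldots,T_4$ by Theorem~\ref{thm:loose cycle r,l}.
\item The sets $T_i\cup\{x\}$ then realise the pattern $[4]$ at $x$, four pair patterns and four singleton patterns, i.e.\ $9$ regions.
\end{itemize}
Passing to the link is what lets a black-box cycle bound do the cycle-closing that greedy embedding cannot.
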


\begin{proof}
For $k\ge4$, we have $q_k\ge 2^{k-1}$, so the whole layer $\binom{[n]}{2^{k-1}}$ has size $\OO_k(n^{q_k})$. Taking $C$ larger than the implicit constant makes the assertion vacuous.

It remains to consider $k=3$. Let $\mathcal F\subseteq\binom{[n]}4$ have size at least $Cn^3$. For $C$ large enough, some vertex $x$ belongs to at least $c n^2$ members of $\mathcal F$, where $c$ is larger than the constant in the bound $\ex_3(n,C_4^3)=\OO(n^2)$ from Theorem~\ref{thm:loose cycle r,l}. Hence the link $L_x=\{E\setminus\{x\}:x\in E\in\mathcal F\}$ contains a copy of the loose $4$-cycle $C_4^3$, say with edges $T_1,T_2,T_3,T_4$ in cyclic order. Put $E_i=T_i\cup\{x\}$ for $i=1,2,3,4$. The point $x$ gives the region with pattern $\{1,2,3,4\}$. Each consecutive intersection $T_i\cap T_{i+1}$ gives a region with pattern $\{i,i+1\}$, with indices taken modulo $4$. Finally, each $T_i$ has one vertex outside $T_{i-1}\cup T_{i+1}$, and this gives a singleton region with pattern $\{i\}$. Thus $E_1,E_2,E_3,E_4$ realize at least $1+4+4=9$ non-outside regions, as required.
\end{proof}

\begin{lemma}\label{lem:region-splitting}
Let $k\ge3$ and put $q_k=2^k-2k+1$. Suppose that $\extr(n,\VD_k)=\OO_k(n^{q_k})$. Then there is a constant $C=C(k)$ such that every family $\mathcal F\subseteq\pow([n])$ of size at least $Cn^{q_k}$ contains $k+1$ members realising at least $2^k+1$ non-outside regions.
\end{lemma}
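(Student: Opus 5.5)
We argue by contradiction. Suppose $|\mathcal F|\ge Cn^{q_k}$ and no $k+1$ members of $\mathcal F$ realise $2^k+1$ non-outside regions.

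\textbf{Step 1: reduce to large sets.} Split $\mathcal F$ by set size.
\begin{itemize}
\item Sets of size $<2^{k-1}$ number $\OO_k(n^{2^{k-1}-1})=\OO_k(n^{q_k})$, since $2^{k-1}-1\le q_k$ for $k\ge3$.
\item Sets of size exactly $2^{k-1}$ are handled by Lemma~\ref{lem:critical-size}: if there are at least $C'n^{q_k}$ of them, we are done.
\end{itemize}
So for $C$ large, the subfamily $\mathcal F'$ of sets of size $>2^{k-1}$ still has size $\ge C''n^{q_k}$. By the hypothesis $\extr(n,\VD_k)=\OO_k(n^{q_k})$, $\mathcal F'$ contains members $A_1,\dots,A_k$ whose $2^k-1$ non-outside regions $R_I$ are all non-empty. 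Every $R_I$ with $1\in I$ lies inside $A_1$, and there are $2^{k-1}$ of them. Since $|A_1|>2^{k-1}$, at least one of these regions has at least two elements.

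\textbf{Step 2: how a new set can gain regions.} Fix such a configuration, with union $U=A_1\cup\dots\cup A_k$ and outside part $[n]\setminus U$. For any further set $B$, the $k+1$ sets $A_1,\dots,A_k,B$ realise at least $2^k-1$ non-outside regions, one for each old region. Extra regions arise in two ways:
\begin{itemize}
\item each old region that $B$ \emph{splits} (meets without containing) contributes one extra region;
\item if $B$ meets $[n]\setminus U$, the pattern $\{k+1\}$ contributes one extra region.
\end{itemize}
So $B$ fails to give $2^k+1$ non-outside regions only if one of the following holds:
\begin{enumerate}[label=(\alph*)]
\item $B$ is a union of old regions on $U$, so it splits nothing;
\item $B\cap([n]\setminus U)=\emptyset$ and $B$ splits exactly one old region.
\end{enumerate}
Sets of type (b) live inside $U$. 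Sets of type (a) are determined by a choice among at most $2^{2^k-1}$ unions of regions on $U$, together with an arbitrary trace on $[n]\setminus U$.

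\textbf{Step 3: induction on the ground set.} To control both types we induct on $n$, following the separation of sets used by Keevash, Leader, Long and Wagner.
\begin{enumerate}
\item Choose the configuration $A_1,\dots,A_k$ so that some region $R$ with $|R|\ge2$ has two elements $x\ne y$.
\item Apply the trace-extension style projection to the pair $\{x,y\}$. Sets that separate $x$ from $y$ split $R$; sets that treat $x,y$ alike are, after identifying $x$ with $y$, a family on $n-1$ points.
\item Sets treating $x,y$ alike give the recursive term $f(n-1)$.
\item Sets separating $x$ from $y$ must, by Step 2, be disjoint from the outside and split nothing else. Such sets, paired with their partner obtained by swapping $x$ and $y$, should number only $\OO_k(n^{q_k-1})$. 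Otherwise, among them we find another $\VD_k$ copy, at exponent $q_k$ but in a smaller structure, together with a set that splits a second region.
\end{enumerate}
This would yield $f(n)\le f(n-1)+\OO_k(n^{q_k-1})$, hence $f(n)=\OO_k(n^{q_k})$, a contradiction for large $C$.

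\textbf{Main obstacle.} The hard step is the last bullet of Step 3: bounding the separating sets at exponent $q_k-1$ rather than $q_k$. The plain trace-extension recurrence of Lemma~\ref{lem:trace-extension} loses a full power of $n$. The saving must come from the rigidity in Step 2: a bad separating set is confined to $U$ and splits only $R$. My first attempt will be to exploit this by re-choosing the $\VD_k$ copy among the separating sets, so that a bad set is pinned down by boundedly many parameters.
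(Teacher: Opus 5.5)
Your Steps 1 and 2 match the paper's setup. Step 3, however, has a genuine gap, and it is the one you flag yourself: nothing establishes that the sets separating $x$ from $y$ number only $\OO_k(n^{q_k-1})$, and the mechanism you suggest cannot deliver this.

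These separating sets are type (b) sets. Step 2 tells you only that they lie in $U$ and split no region other than $R$. Their traces on $R$ are otherwise free, so they are not pinned down by boundedly many parameters, and $U$ may have $n-1$ points. To extract a further $\VD_k$ trace from them, the hypothesis needs about $C_0|U|^{q_k}$ of them. Assuming there are more than $Kn^{q_k-1}$ of them therefore gives you nothing to apply the hypothesis to. As it stands, identifying $x$ with $y$ yields only $f(n)\le f(n-1)+N_{xy}$, where $N_{xy}$ is the number of separating sets. Nothing in your argument keeps $N_{xy}$ below order $n^{q_k}$, so this recurrence does not give $f(n)=\OO_k(n^{q_k})$.

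The missing idea is that no piece of the decomposition needs a smaller exponent. Your own split into types (a) and (b) already closes the induction on $n$, which is what the paper does.

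First record that a trace copy of $\VD_k$ also realises the outside pattern, so $r=|[n]\setminus U|\ge 1$. The type (b) sets form a family on $U$, which has $n-r$ points. The type (a) sets are constant on each non-outside region. Compressing each such region to a single point maps them injectively to a family on $r+2^k-1$ points. Any configuration in the compressed family lifts back by choosing one representative per region. Since some region has at least two elements, $r+2^k-1\le n-1$.

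Both families still contain no $k+1$ members realising $2^k+1$ non-outside regions. Applying the induction hypothesis to both gives
\[
|\mathcal F|\le C\bigl((n-r)^{q_k}+(r+2^k-1)^{q_k}\bigr),\qquad 1\le r\le n-2^k .
\]
The right-hand side is convex in $r$, so it is at most $C\bigl((n-1)^{q_k}+2^{kq_k}\bigr)<Cn^{q_k}$ for large $n$. The saving comes from the two ground sets having total size only $n+2^k-1$, together with convexity, not from a lower-order bound on either piece.
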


\begin{proof}
Let $C_0$ be such that every family of at least $C_0n^{q_k}$ sets contains a trace copy of $\VD_k$, and let $C_1$ be the constant from Lemma~\ref{lem:critical-size}. We prove the lemma by induction on $n$, with the final constant $C$ chosen large enough in terms of $k,C_0,C_1$ and the finitely many initial values of $n$. Assume that $\mathcal F\subseteq\pow([n])$ contains no $k+1$ members realising $2^k+1$ non-outside regions. We show that $|\mathcal F|<Cn^{q_k}$. If fewer than $C_0n^{q_k}$ members of $\mathcal F$ have size different from $2^{k-1}$, then either $|\mathcal F|<(C_0+C_1)n^{q_k}$ or the critical layer $\binom{[n]}{2^{k-1}}$ contains at least $C_1n^{q_k}$ members, contradicting Lemma~\ref{lem:critical-size}.

We may therefore choose $k$ members $A_1,
\ldots,A_k$ of size different from $2^{k-1}$ which form a trace copy of $\VD_k$. Write $R_I=R_I(A_1,
\ldots,A_k)$. Each $R_I$ is non-empty. Since every $A_j$ contains at least one vertex from each of the $2^{k-1}$ regions $R_I$ with $j\in I$, but $|A_j|\ne 2^{k-1}$, we have $|A_j|>2^{k-1}$. Hence at least one non-outside region has size at least two.

A set $B\subseteq[n]$ splits a region $R_I$ if both $B\cap R_I$ and $R_I\setminus B$ are non-empty. Let $s(B)$ be the number of non-outside regions split by $B$, and let $\eta(B)=1$ if $B\cap R_\emptyset\ne\emptyset$, and $\eta(B)=0$ otherwise. The sets $A_1,
\ldots,A_k,B$ realize at least $(2^k-1)+s(B)+\eta(B)$ non-outside regions. By the assumption on $\mathcal F$, we must have $s(B)+\eta(B)\le1$ for every $B\in\mathcal F$. In particular, any member which splits a non-outside region is disjoint from $R_\emptyset$ and splits at most one non-outside region.

Let $\mathcal F_1$ be the subfamily of all members that split at least one non-outside region, and put $r=|R_\emptyset|$. Then $r\ge1$ and $\mathcal F_1$ is contained in $[n]\setminus R_\emptyset$. By the induction hypothesis applied on this smaller ground set, $|\mathcal F_1|\le C(n-r)^{q_k}$.

Let $\mathcal F_2=\mathcal F\setminus\mathcal F_1$. No member of $\mathcal F_2$ splits a non-outside region. Compress every non-outside region $R_I$, $I\ne\emptyset$, to one vertex $x_I$, and keep the vertices of $R_\emptyset$. For $B\in\mathcal F_2$, define $B^*=(B\cap R_\emptyset)\cup\{x_I:R_I\subseteq B\}$. The map $B\mapsto B^*$ is injective, because the members of $\mathcal F_2$ are constant on every non-outside region. Moreover, every trace configuration in the compressed family lifts to the same trace configuration in $\mathcal F_2$ by choosing one representative from each compressed region used by the trace. Since at least one non-outside region has size at least two, the compressed ground set has size $r+2^k-1\le n-1$. Therefore, the induction hypothesis gives $|\mathcal F_2|\le C(r+2^k-1)^{q_k}$.

Combining the two bounds gives $|\mathcal F|\le C((n-r)^{q_k}+(r+2^k-1)^{q_k})$. Here $1\leq r\leq n-2^k$. The function
$$h(r)=(n-r)^{q_k}+(r+2^k-1)^{q_k}$$
is convex on this interval, so its maximum is attained at an endpoint. Therefore,
$$h(r)\leq (n-1)^{q_k}+2^{kq_k}
=n^{q_k}-\Omega_k\bigl(n^{q_k-1}\bigr)+O_k(1)
<n^{q_k}$$
for all sufficiently large $n$. Increasing $C$ to cover the remaining values of $n$ completes the induction.
\end{proof}

\begin{proof}[Proof of Theorem~\ref{thm:main-trace}]
The case $k=3$ is the theorem of Keevash, Leader, Long and Wagner~\cite{KLLW}. Now let $k\ge4$ and assume the result for $(k-1)$, with exponent $q_{k-1}=2^{k-1}-2(k-1)+1$. 

For $1\le s\le 2^k-1$, let $\mathcal G_s$ be the finite family of all labeled $k$-edge hypergraphs obtained by restricting $\VD_k^\circ$ to an $s$-element subset of its vertex set, while retaining the edge labels. Lemma~\ref{lem:region-splitting} applied with $(k-1)$ shows that the maximum size of a family containing no member of $\mathcal G_{2^{k-1}+1}$ as a trace is
\(\OO_k(n^{q_{k-1}})\). Applying Lemma~\ref{lem:trace-extension} successively $2^{k-1}-2$ times therefore shows that the maximum size of a family containing no trace copy of $\VD_k^\circ$ is
\[
\OO_k\left(n^{q_{k-1}+2^{k-1}-2}\right)
=
\OO_k\left(n^{2^k-2k+1}\right)
=
\OO_k(n^{q_k}).
\]

It remains to force the outside region. Let $D=D(k)$ be such that every family of more than $Dn^{q_k}$ sets contains a trace copy of $\VD_k^\circ$, and let $\mathcal F$ have more than $2Dn^{q_k}$ members. Fix a vertex $x$. Then at least half of the members of $\mathcal F$ avoid $x$ or at least half contain $x$. In the first case, applying the $\VD_k^\circ$ statement to the subfamily avoiding $x$ gives all non-outside regions, and $x$ supplies the outside region. In the second case, complement the sets containing $x$, apply the $\VD_k^\circ$ statement, and then complement back. The obtained patterns are all patterns except $[k]$, while $x$ supplies the all-one pattern. Thus we obtain a trace copy of $\VD_k$.
\end{proof}

\section{Venn diagrams in fixed uniformity}\label{sec:uniform}

We recall a classical theorem of Erd\H{o}s \cite{ErdosTuran}: for every fixed $r$-partite $r$-uniform hypergraph $F$, there exists $\varepsilon=\varepsilon(F)>0$ such that
\(
\ex_r(n,F)=O_F(n^{r-\varepsilon}).
\)
Equivalently, a fixed $r$-uniform hypergraph has Tur\'an density zero if and only if it is $r$-partite. For $\VD_k^\circ$, pair each nonempty proper subset $I\subsetneq[k]$ with its complement $[k]\setminus I$, and assign one color to each complementary pair. Assign one additional color to the vertex $[k]$. This gives an $r$-partite representation with $r=2^{k-1}$, since every edge of $\VD_k^\circ$ contains exactly one vertex of each color. Hence, for some $\varepsilon=\varepsilon(k)>0$,
\(
\ex_{2^{k-1}}(n,\VD_k^\circ)
= O_k\bigl(n^{2^{k-1}-\varepsilon}\bigr).
\) The following construction shows that the lower-bound exponent from the trace problem also appears in the fixed-uniformity problem.

\begin{proof}[Proof of Theorem~\ref{thm:uniform-lower}]
Partition the ground set as $X\cup Y$, where $|X|,|Y|=\Theta(n)$, and choose a matching $M$ of size $\Theta(n$) in $Y$.  Define a $2^{k-1}$-uniform hypergraph by taking all sets $A\cup P$, where $A\in\binom{X}{2^{k-1}-2}$ and $P\in M$. This gives $\Om_k(n^{2^{k-1}-1})$ edges.

Suppose that $k$ selected edges formed a copy of $\VD_k^\circ$. Choose one of them and let $P=\{p,p'\}$ be its matching pair. Every selected edge contains either both $p,p'$ or neither of them. Thus $p$ and $p'$ have the same incidence pattern with respect to the selected $k$ edges. This is impossible in $\VD_k^\circ$, where all vertices have distinct incidence patterns.

For the trace statement in uniformity $r\ge 2^{k-1}$, replace the matching pairs by disjoint blocks of size $r-2^{k-1}+2$, and take all unions of such a block with a $(2^{k-1}-2)$-subset of $X$. The number of edges is again $\Om_{k,r}(n^{2^{k-1}-1})$. If $k$ selected edges realized a trace copy of $\VD_k$, then the trace of each selected edge would have size $2^{k-1}$. But each selected edge has only $2^{k-1}-2$ vertices in $X$, so at least two vertices from its block would have to appear in the trace. Those two vertices have identical incidence pattern with respect to all selected edges, contradicting the fact that the vertices of $\VD_k$ have distinct incidence patterns.
\end{proof}
For the proof of Theorem~\ref{thm:four-uniform}, let us state the following extremal result, first proved by Frankl and F\"uredi for $n\ge75$ and later extended by Cs\'ak\'any and Kahn.
\begin{theorem}[\cite{FranklFuredi,CsakanyKahn}]\label{thm:exact_loose cycle}
For \(n\ge 7\),
\(
\ex_3(n,C_3^3)=\binom{n-1}{2}.
\)
\end{theorem}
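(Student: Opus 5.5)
The lower bound comes from a star. Fix a vertex $x$ and take all $\binom{n-1}{2}$ triples that contain $x$. Any three of these edges have $x$ in common. A copy of $C_3^3$ needs an empty triple intersection, so the star contains no copy of $C_3^3$. The real content is therefore the upper bound $\ex_3(n,C_3^3)\le\binom{n-1}{2}$ for $n\ge 7$. I plan to prove it by a linear-algebra argument over $\mathbb F_2$, in the spirit of Cs\'ak\'any and Kahn.

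\emph{Setup.} Let $V$ be the space of $\mathbb F_2$-chains on the edges of $K_n$. Map each triple $T$ to its boundary $\partial T\in V$, the sum of its three pairs. The image of $\partial$ on all triples is the cycle space of $K_n$, which has dimension $\binom{n}{2}-(n-1)=\binom{n-1}{2}$. So it suffices to show that for a $C_3^3$-free $3$-graph $H$ we have $|H|\le\dim\operatorname{span}\{\partial T:T\in H\}$.

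\emph{Independence fails, so analyse dependencies.} The boundaries need not be independent. For example, the four faces of a tetrahedron form a $C_3^3$-free family whose boundaries sum to zero. I will therefore study a minimal dependent subfamily $\mathcal D\subseteq H$. Such a family is a $2$-cycle: every pair is covered by an even number of members of $\mathcal D$. The key lemma I aim for is a structural one. A minimal $2$-cycle contained in a $C_3^3$-free graph must be \emph{local}: I expect it to be a tetrahedron boundary, or more generally a configuration whose edges all pass through one or two vertices (double cones over cycles).

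To prove this lemma, I will take two edges of $\mathcal D$ meeting in exactly one vertex $v$. Walking along the pseudo-surface from there, I will show that a third edge meeting both in single, distinct vertices away from $v$ appears unless everything is forced to go through $v$. Such a third edge would create a loose triangle.

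\emph{Counting step and the main obstacle.} Once dependencies are shown to be local, I will run a counting argument. Vertices carrying such local cycles have restricted links; in particular, a link that contains a cycle forbids many triples elsewhere. I then induct on $n$, deleting a vertex of minimum degree. If some vertex has degree at most $n-2$, the bound $\binom{n-2}{2}+(n-2)=\binom{n-1}{2}$ closes the induction. Otherwise every degree is at least $n-1$, and I will derive a $C_3^3$ from the structure of links. For this I take two disjoint link edges at different vertices and use $n\ge7$ to find room for the third edge.

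The small-$n$ base cases, around $n=7$, need a direct check, and this is exactly where the threshold $7$ enters. I expect the main obstacle to be the structural lemma on minimal $2$-cycles. It requires exhausting the ways a $C_3^3$-free family can close up into an $\mathbb F_2$-cycle without producing a loose triangle, and controlling these uniformly in $n$.
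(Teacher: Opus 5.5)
Your star lower bound is correct, and so is the count $\binom{n}{2}-(n-1)=\binom{n-1}{2}$ for the $\mathbb F_2$ cycle space of $K_n$. The upper bound, however, is not proved. The paper gives no proof here; it quotes Frankl--F\"uredi and Cs\'ak\'any--Kahn, so your argument has to stand on its own.

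The linear-algebra part never produces an inequality. Your sufficient condition $|H|\le\dim\operatorname{span}\{\partial T:T\in H\}$ is just linear independence of the boundaries, which you correctly refute with $K_4^{(3)}$. $K_5^{(3)}$ is another instance: it is $C_3^3$-free and has a $4$-dimensional space of dependencies. Your structural lemma on minimal $2$-cycles is at present only ``I expect'' plus a heuristic walk on a pseudo-surface. Even granting it, knowing the shape of the dependencies does not bound $|H|$. Writing $|H|=\operatorname{rank}\partial+\dim\ker\partial$, you would need a compensation inequality $\dim\ker\partial\le\binom{n-1}{2}-\operatorname{rank}\partial$. That is, every independent $2$-cycle must be paid for by a defect of the span inside the cycle space of $K_n$: either a pair missing from the shadow, or a $1$-cycle that is not a boundary. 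For example, $K_5^{(3)}\cup\{\{6,7,a\}:a\in[5]\}$ on $7$ vertices is $C_3^3$-free with $15=\binom{6}{2}$ edges. Its $4$-dimensional space of dependencies is balanced exactly by a $4$-dimensional space of non-bounding $1$-cycles, spanned by the triangles $7ab$. Your plan has no mechanism of this kind, and this trade-off is the real content of a homological proof.

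Your ``counting step'' is then a minimum-degree induction that uses none of the linear algebra. So the upper bound rests entirely on two unproved claims. The first is that a $C_3^3$-free $3$-graph on $n$ vertices in the inductive range cannot have minimum degree at least $n-1$. The second is the base case $n=7$.

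The first claim does not follow from the theorem: a bound of $\binom{n-1}{2}$ edges still allows average degree close to $3n/2$. It is therefore a genuine structural statement, and your justification does not establish it, because spare vertices do not put the required third edge into $H$. In the example above, vertex $1$ has degree $7\ge n-1$ and its link contains the disjoint edges $23$ and $45$, yet no loose triangle appears. Any contradiction must use the degree condition at all vertices at once; there, only $6$ and $7$ fall short, with degree $5<n-1$. That needs a structural analysis of $C_3^3$-free $3$-graphs that you have not supplied.

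The same example shows the base case $n=7$ is not a routine check, since the extremal configuration there is not unique (the star is another). As it stands, you have the lower bound and a programme for the upper bound whose load-bearing steps are missing.
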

\begin{proof}[Proof of Theorem~\ref{thm:four-uniform}]
For the upper bound, let $\mathcal F\subseteq\binom{[n]}4$ and assume $|\mathcal F|>\frac{n}{4}\binom{n-2}{2}$. Some vertex $x$ is contained in more than $\binom{n-2}{2}$ members of $\mathcal F$. The link of $x$ is a $3$-uniform hypergraph on $n-1$ vertices with more than $\binom{n-2}{2}$ edges. By Theorem~\ref{thm:exact_loose cycle} for the loose triangle, this link contains a copy of $C_3^3$. Adding $x$ to the three link edges gives a copy of $\VD_3^\circ$.

For the lower bound, expose the vertices in disjoint pairs. First choose a pair $P_1$ and add all $4$-sets $P_1\cup Q$, where $Q$ ranges over all pairs among the remaining $(n-2)$ vertices. Then choose a pair $P_2$ from the unused vertices and add all $P_2\cup Q$, where $Q$ avoids $P_1\cup P_2$, and continue. This gives exactly $\sum_{i=1}^{\lfloor n/2\rfloor}\binom{n-2i}{2}$ edges.

Suppose that three selected edges formed a copy of $\VD_3^\circ$. Choose among them an edge whose distinguished pair $P_i=\{p,p'\}$ was exposed earliest. Any selected edge exposed later is disjoint from $P_i$, while any selected edge with distinguished pair $P_i$ contains both $p$ and $p'$. Hence $p$ and $p'$ have the same incidence pattern with respect to the selected three edges, impossible in $\VD_3^\circ$.
\end{proof}

\section{A fixed-uniformity trace problem for the loose triangle}\label{sec:loose}

We conclude with some auxiliary results about the $3$-uniform loose triangle $C_3^3$. It is one of the smallest configurations appearing inside $\VD_3^\circ$. We show that its unrestricted trace extremal number is cubic, but in every fixed uniformity, the order drops to quadratic.

\begin{observation}\label{obs:loose-unrestricted}
The maximum size of a set system on $n$ vertices with no trace copy of $C_3^3$ is $\Theta(n^3)$.
\end{observation}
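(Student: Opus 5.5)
The plan is to prove a lower bound of order $n^3$ and an upper bound of order $n^3$ separately.

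\emph{Upper bound.} Every trace copy of $\VD_3^\circ$ contains a trace copy of $C_3^3$. Indeed, restrict to the six vertices $\{1\},\{2\},\{3\},\{1,2\},\{2,3\},\{1,3\}$. The edge $E_1$ then becomes $\{\{1\},\{1,2\},\{1,3\}\}$, and similarly for $E_2,E_3$. Consecutive edges meet in exactly one vertex, and the triple intersection is empty, so this is a loose triangle. Hence a family with no trace copy of $C_3^3$ has no trace copy of $\VD_3$. By the theorem of Keevash, Leader, Long and Wagner~\cite{KLLW}, such a family has size $\OO(n^3)$. Alternatively, one can use Sauer--Shelah directly: a family shattering a $4$-set $\{a,b,c,d\}$ contains sets whose traces are $\{a,b,c\}$, $\{a,d,\cdot\}$ patterns, and so on. It is cleaner, though, to note that shattering $6$ points gives a $C_3^3$ trace immediately. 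That only yields $\OO(n^5)$, so I would rely on \cite{KLLW}.

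\emph{Lower bound.} Take all subsets of $[n]$ of size at most $3$. This gives $\Om(n^3)$ sets. I need to check that no three members $A_1,A_2,A_3$ with $|A_i|\le 3$ realize a loose triangle on some $S$. The traces $A_i\cap S$ must be exactly the three edges of $C_3^3$, each of size $3$, so $A_i\subseteq S$ and $A_i$ is itself the edge. The three sets $A_1,A_2,A_3$ of size $3$ forming a loose triangle are genuinely in the family, so this family fails.

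I would therefore use sets of size at most $2$ together with some cubic structure. Instead, take all $3$-sets containing a fixed vertex $x$, plus arbitrary extras. Any three such sets pairwise share $x$, so their triple intersection is nonempty. A trace cannot delete this common vertex selectively: if $x\in S$, then $x$ lies in all three traces, which is impossible in $C_3^3$. If $x\notin S$, the traces have size at most $2<3$. This gives only $n^2$ sets, however.

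For $n^3$, take all $4$-sets containing $x$, i.e.\ the sets $\{x\}\cup T$ with $|T|=3$. Traces with $x\in S$ are excluded as before. With $x\notin S$, the traces are subsets of the sets $T$, and three triples $T_i$ could form a loose triangle. So I would instead take all sets $\{x,y\}\cup P$ in the spirit of Theorem~\ref{thm:four-uniform}. The main obstacle is exactly this step: designing a cubic family in which every potential loose-triangle trace is blocked by a vertex pattern that repeats. My first attempt is the sets $P_i\cup Q$ with exposed pairs $P_i$ and $|Q|\le 2$, sized $\Theta(n^3)$. In a trace $C_3^3$ every vertex has at most two incident edges and its pattern is distinct. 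Choosing the earliest exposed pair, both of its points have identical incidence patterns, so at most one of them lies in $S$. Each trace of size $3$ then uses at most one point of its $P_i$ plus $Q$, i.e.\ the whole of the set minus one pair point, and I would check the remaining case analysis there.
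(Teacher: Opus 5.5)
Your upper bound is correct and is the paper's argument: $C_3^3$ is a trace of $\VD_3$, so the theorem of Keevash, Leader, Long and Wagner gives $\OO(n^3)$. The lower bound, however, has a genuine gap. You leave it unfinished, and the construction you settle on fails. Take exposed pairs $P_1=\{1,2\}$ and $P_2=\{3,4\}$. The sets $\{1,2,5,6\}$, $\{3,4,5,7\}$, $\{3,4,6,8\}$ all belong to your family, even with the restriction that each $Q$ avoids the earlier exposed pairs. On $S=\{1,3,5,6,7,8\}$ their traces are $\{1,5,6\},\{3,5,7\},\{3,6,8\}$. These form a loose triangle. Your repeated-pattern observation only stops both points of a pair from lying in $S$. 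A $C_3^3$ trace needs just three points of each $4$-set, so dropping one point per pair costs nothing. That argument worked in Theorem~\ref{thm:four-uniform} only because there the traces had to be the whole $4$-sets.

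More fundamentally, no repair along these lines can work, because every candidate you consider uses sets of bounded size. By Theorem~\ref{thm:loose-triangle-trace}, each $r$-uniform layer of a $C_3^3$-trace-free family has $\OO_r(n^2)$ members; for $r\le 2$ this is trivial. Hence a $C_3^3$-trace-free family whose sets have size at most $R$ has only $\OO_R(n^2)$ members. The cubic lower bound requires sets of unbounded size.

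The missing ingredient is the paper's chain factor. Split $[n]=X\cup Y\cup Z$ into parts of linear size and order $Z=\{z_1,\dots,z_m\}$. Take all sets $\{x,y\}\cup\{z_1,\dots,z_t\}$ with $x\in X$, $y\in Y$ and $0\le t\le m$; this gives $\Om(n^3)$ sets. Given three members, label them so that $t_1\le t_2\le t_3$. Every $Z$-vertex of the first member then lies in all three members. So the regions with patterns $\{1\}$, $\{1,2\}$ and $\{1,3\}$ can only be supplied by the two vertices $x_1,y_1$, and one of these three regions is empty. A trace copy of $C_3^3$ needs all three to be non-empty, so none exists.
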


\begin{proof}
The upper bound follows from the theorem of Keevash, Leader, Long and Wagner~\cite{KLLW}, since $C_3^3$ is  obtained as a trace of $\VD_3^\circ$. For the lower bound, take the standard product construction with two singleton factors and one chain factor. More formally, split the ground set into three parts \(X\cup Y\cup Z\), each of linear size. Order \(Z=\{z_1,\ldots,z_m\}\), and put \(Z_t=\{z_1,\ldots,z_t\}\). Let

\[\mathcal F=\{\{x,y\}\cup Z_t:x\in X,\ y\in Y,\ 0\le t\le m\}.
\]

Then \(|\mathcal F|=|X||Y|(m+1)=\Omega(n^3)\). We claim that \(\mathcal F\) contains no trace copy of \(C_3^3\). Take three members \(A_i=\{x_i,y_i\}\cup Z_{t_i}\) for $i=1,2,3$ and assume \(t_1\le t_2\le t_3\). The chain part \(Z\) contributes only to regions of the form \(\{3\},\{2,3\},\{1,2,3\}\), and hence contributes nothing to \(R_{\{1\}},R_{\{1,2\}},R_{\{1,3\}}\). In a trace copy of \(C_3^3\), the three pair-region \(R_{\{1,2\}},R_{\{1,3\}},R_{\{2,3\}}\) and the three singleton regions \(R_{\{1\}},R_{\{2\}},R_{\{3\}}\) must all be non-empty. Since \(Z\) cannot supply \(R_{\{1,2\}}\) or \(R_{\{1,3\}}\), these two regions must be supplied by the two singleton coordinates \(X\) and \(Y\). But then those two coordinates are already used to create \(R_{\{1,2\}}\) and \(R_{\{1,3\}}\), and neither can also supply \(R_{\{1\}}\). The chain part cannot supply \(R_{\{1\}}\) either. Thus \(R_{\{1\}}\) is empty, a contradiction.
\end{proof}

The proof of Theorem~\ref{thm:loose-triangle-trace} uses a pruning step and the Erd\H{o}s-Rado sunflower lemma. We say that an $r$-uniform hypergraph $\mathcal F$ is $C$-nuclear if every pair of vertices is contained in either no edge or at least $C$ edges. We recall first the sunflower lemma.

\begin{lemma}[Erdős-Rado sunflower lemma \cite{ErdosRado}]\label{lem:ER-sunflower}
For every pair of positive integers \(s\) and \(t\), there is a number
\(N(s,t)\) such that every family of \(s\)-element sets with at least
\(N(s,t)\) members contains a sunflower with \(t\) petals. That is, it
contains distinct sets \(S_1,\ldots,S_t\) and a set \(K\) such that
\(S_i\cap S_j=K\) for all \(i\neq j\).
\end{lemma}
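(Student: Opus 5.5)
The statement to prove is the Erdős–Rado sunflower lemma (Lemma~\ref{lem:ER-sunflower}). We prove it by induction on $s$ and show that $N(s,t)=s!\,(t-1)^s+1$ works. That is, every family $\mathcal S$ of $s$-sets with more than $s!(t-1)^s$ members contains a sunflower with $t$ petals.

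For the base case $s=1$, the members are distinct singletons, hence pairwise disjoint. Any $t$ of them form a sunflower with kernel $K=\emptyset$, and more than $t-1$ members are available.

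For the inductive step, let $s\ge 2$ and $|\mathcal S|>s!(t-1)^s$. Take a maximal subfamily $S_1,\ldots,S_m$ of pairwise disjoint members. If $m\ge t$, these already form a sunflower with empty kernel. Otherwise $m\le t-1$, and $U=S_1\cup\cdots\cup S_m$ has at most $s(t-1)$ elements. By maximality, every member of $\mathcal S$ meets $U$. By pigeonhole, some element $u\in U$ lies in more than
\[
\frac{s!(t-1)^s}{s(t-1)}=(s-1)!\,(t-1)^{s-1}
\]
members. The sets $S\setminus\{u\}$ for these members are distinct $(s-1)$-sets. By the induction hypothesis, some $t$ of them, $S'_1,\ldots,S'_t$, form a sunflower with kernel $K'$. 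Then the sets $S'_i\cup\{u\}$ are distinct members of $\mathcal S$ with pairwise intersections exactly $K'\cup\{u\}$, since $u\notin S'_i$ for each $i$. This gives the required sunflower.

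The only step needing care is the pigeonhole count together with the distinctness of the reduced sets $S\setminus\{u\}$, so that the induction applies to a genuine family of distinct $(s-1)$-sets. Both follow directly from the fact that all the selected members contain $u$. No other step presents an obstacle, and the constant $N(s,t)=s!(t-1)^s+1$ depends only on $s$ and $t$, as required.
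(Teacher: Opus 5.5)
Your proof is correct. It is the classical Erd\H{o}s--Rado argument: take a maximal pairwise disjoint subfamily, apply pigeonhole on its union of at most $s(t-1)$ elements, and induct on $s$ after removing the popular element $u$. This yields $N(s,t)=s!\,(t-1)^s+1$. The paper does not reprove this lemma but cites it from \cite{ErdosRado}, and your argument is the standard proof of that result, so nothing is missing.
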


\begin{lemma}\label{lem:nuclear}
For every pair of positive integers $D$ and $r\ge3$, there is a constant $C=C(D,r)$ with the following property. If $\mathcal F$ is a non-empty $C$-nuclear $r$-uniform hypergraph, then there are distinct vertices $x,y,z$ such that each of the three families of edges containing exactly two of $x,y,z$ has size at least $D$.
\end{lemma}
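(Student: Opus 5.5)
The plan is to find a pair $\{x,y\}$ for which no third vertex $z$ lies in too many edges together with $x$ and $y$. Once such a pair is found, nuclearity gives all three "exactly two" counts by subtraction. Only one case of this is actually settled below.

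Fix a hierarchy of constants $C=C_2\gg C_3\gg\cdots\gg C_r\gg D$, with each $C_s$ much larger than $(r-s+1)\,C_{s+1}+D$. Write $\deg(S)$ for the number of edges containing a set $S$.

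\textbf{Reduction.} The key observation is the following. Suppose $\{x,y\}$ lies in some edge and every $z\notin\{x,y\}$ satisfies $\deg(\{x,y,z\})<C_3$. Choose any $z$ with $\{x,y,z\}$ contained in an edge; this is possible since $r\ge3$ and $\mathcal F\neq\emptyset$. Each of the pairs $xy$, $xz$, $yz$ then lies in an edge, so by $C$-nuclearity each has degree at least $C$. The number of edges containing exactly the two vertices $x,y$ of the triple is $\deg(\{x,y\})-\deg(\{x,y,z\})\ge C-C_3\ge D$, and the same holds for $xz$ and $yz$. So it suffices to find such a "light" pair.

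\textbf{Finding a light pair (main obstacle).} Start from a pair $S_2$ inside an edge. While the current set $S$ of size $s$ has a superset $S\cup\{w\}$ with $\deg(S\cup\{w\})\ge C_{s+1}$, enlarge $S$. This stops at some $S$ with $|S|=s\le r$ and $\deg(S)\ge C_s$, and every one-vertex extension of $S$ has degree less than $C_{s+1}$ (take $C_{r+1}=2$). If $s=2$, the reduction applies directly.

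For $s\ge3$ I would not use triples from $S$. Instead I would use the link $L_S$, which is $(r-s)$-uniform with at least $C_s$ edges and maximum vertex degree below $C_{s+1}$. Greedily it contains a matching of size at least $C_s/((r-s)C_{s+1})$, and hence, via Lemma~\ref{lem:ER-sunflower} if needed, a large sunflower with empty kernel. The idea is then to take $x\in S$ and $y,z$ from different petals, so that $y$ and $z$ are rarely together with $x$. The pair counts $xy$ and $xz$ with the third vertex excluded are then large. The difficulty is the pair $yz$: it need not lie in any edge.

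The fallback is to induct on the uniformity. Pass to the link of a vertex $w\in S$ (uniformity $r-1$), or to the family $\{E\setminus S\}$. In this family all pair degrees coming from $S$-extensions are at least $C_{s}$-ish, and I would try to restore a nuclear-type property by deleting low-codegree pairs. Here a pruning step of the same kind as the one used to produce $C$-nuclear families would be iterated with the constant hierarchy above.

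I expect this case $s\ge3$ to be the real obstacle. Concretely, the task is to show that some pair of vertices is light in the sense of the reduction, or else that the heavy triples force a lower-uniformity nuclear structure to which the statement can be applied recursively, ending at $s=2$.
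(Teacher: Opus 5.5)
Your reduction is sound: a single triple $\{x,y,z\}$ lying in an edge with $\deg(\{x,y,z\})\le C-D$ already gives the conclusion. The gap is that such a triple need not exist, so the case $s\ge3$ you leave open is the whole difficulty, not a technicality. Take the complete $4$-graph $K_n^{(4)}$ with $n$ large. It is $C$-nuclear, since every pair has degree $\binom{n-2}{2}$. Yet every triple has degree $n-3$, which exceeds $C_3$ and even $C$. So there is no light pair at all, and your search always stops at $s=3$.

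The lemma does hold for $K_n^{(4)}$, but only because the pair degrees are far larger than $C$. Your reduction uses nothing but the lower bound $\deg(\{x,y\})\ge C$ from nuclearity. Any version that compares $\deg(\{x,y,z\})$ with the true pair degrees is just a restatement of the lemma. The fallback ideas (a sunflower in the link $L_S$, induction on uniformity with iterated pruning) are not carried out. The one concrete attempt breaks, as you note, because $yz$ need not lie in any edge.

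The missing idea is to argue by contradiction inside one fixed edge, using the negated conclusion for \emph{every} triple rather than searching for one good triple.

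If the lemma fails, then for any three vertices $a_1,a_2,a_3$ one of the three families has fewer than $D$ members; after relabeling, say the edges containing $a_1,a_2$ and avoiding $a_3$. Hence for any set $B\supseteq\{a_1,a_2,a_3\}$, all but fewer than $D$ of the edges containing $B\setminus\{a_3\}$ also contain $a_3$. This gives $\deg(B)>\deg(B\setminus\{a_3\})-D$.

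Now fix an edge $A$ and induct on $\ell$, starting from nuclearity at $\ell=2$. Every $\ell$-subset of $A$ then has degree at least $C-D(\ell-2)$. With $C=D(r-2)+2$ this gives $\deg(A)\ge2$. That is impossible, because $A$ is the only $r$-edge containing $A$. This route needs no hierarchy of constants, and $C$ is linear in $D$ and $r$.
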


\begin{proof}
Set $C=D(r-2)+2$. Suppose the conclusion fails, and choose an edge $A=\{a_1,\ldots,a_r\}\in\mathcal F$. We claim by induction on $\ell\ge2$ that every $\ell$-subset of $A$ is contained in at least $C-D(\ell-2)$ edges. The case $\ell=2$ is exactly $C$-nuclearity.

Assume the claim for $\ell$ and consider $B=\{a_1,\ldots,a_{\ell+1}\}$. Applying the negation of the lemma to $a_1,a_2,a_3$, we may relabel so that fewer than $D$ edges contain $a_1,a_2$ and avoid $a_3$. By induction, the set $\{a_1,a_2,a_4,\ldots,a_{\ell+1}\}$ is contained in at least $C-D(\ell-2)$ edges. All but fewer than $D$ of these edges also contain $a_3$, so $B$ is contained in at least $C-D(\ell-1)$ edges. Taking $\ell=r$ says that the edge $A$ is contained in at least two distinct $r$-edges, contradiction.
\end{proof}

\begin{lemma}\label{lem:sunflower-selection}
For every $s\ge1$ there is $D_0=D_0(s)$ with the following property. If $\mathcal A,\mathcal B,\mathcal C$ are three families of $s$-sets and each has size at least $D_0$, then there are $A\in\mathcal A$, $B\in\mathcal B$ and $C\in\mathcal C$ such that each of $A,B,C$ has a private element with respect to the other two.
\end{lemma}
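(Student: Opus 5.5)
The plan is to use the sunflower lemma (Lemma~\ref{lem:ER-sunflower}) to replace each family by a sunflower with many petals, and then choose petals greedily so that the three chosen sets are in general position. Fix a large $t=t(s)$, say $t=3s+2$, and set $D_0=N(s,t)$. Apply Lemma~\ref{lem:ER-sunflower} to $\mathcal A$, $\mathcal B$ and $\mathcal C$ separately. This gives sunflowers $A_1,\dots,A_t$ with kernel $K_A$, $B_1,\dots,B_t$ with kernel $K_B$, and $C_1,\dots,C_t$ with kernel $K_C$. The petals $A_i\setminus K_A$ are pairwise disjoint, and the same holds for the other two families.

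The difficulty is that a kernel may be empty of private potential, so private elements have to come from petals. We use two facts. First, since the sets are distinct $s$-sets, $|K_A|<s$, so every petal $A_i\setminus K_A$ is non-empty. Second, a fixed set $X$ of size at most $2s$ meets at most $|X|\le 2s$ of the pairwise disjoint petals of a sunflower. The selection then proceeds in order.

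First choose $A=A_1$. Next choose $B$ among the $B_j$ whose petal is disjoint from $A$; at most $s$ of the $B_j$ are excluded. Then choose $C$ among the $C_l$ whose petal is disjoint from $A\cup B$; at most $2s$ are excluded.

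The choice of $C$ already gives a private element of $C$. Its petal is non-empty and avoids $A\cup B$. The element of $B$'s petal avoids $A$, but it may lie in $C$, namely in $K_C$ or in $C$'s petal.

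To handle this, refine the ordering so that all three kernels are avoided. Choose the petal of $B$ disjoint from $A\cup K_C$; at most $2s$ are excluded. Then choose the petal of $C$ disjoint from $A\cup B$. A point of $B$'s petal then lies outside $A$, outside $K_C$, and outside $C$'s petal, because $C$'s petal avoids $B$. So it is private to $B$.

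For $A$, we need a point of $A$'s petal outside $B\cup C$. Choose $A$ last instead, with petal disjoint from $K_B\cup K_C$. Petal-disjointness from $B$ and $C$ also has to hold, and it does: $B$'s petal avoids $A$ entirely, since $B$ was chosen disjoint from $A$.

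The symmetric way to organise this is as follows. Pick $A$ with petal avoiding $K_B\cup K_C$; at most $2s$ of the $A_i$ are excluded. Pick $B$ with petal avoiding $A\cup K_C$. Pick $C$ with petal avoiding $A\cup B$. Then the petals are pairwise disjoint, and each petal avoids the other two kernels. Hence each non-empty petal gives a private element of its set.

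Since each step excludes at most $2s$ petals, $t=2s+1$ suffices, and we may take $D_0(s)=N(s,2s+1)$. The only step needing care is the bookkeeping that every petal avoids both the other petals and the other kernels. The step "$|K|<s$, so petals are non-empty" relies on the three families consisting of distinct sets, with $t\ge2$.
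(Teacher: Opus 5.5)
Your argument is correct and is essentially the paper's proof: you pass to sunflowers, whose petals are non-empty because $|K|<s$. Each chosen petal is made to avoid the other two kernels, and the three petals are chosen pairwise disjoint, so every petal element is private. Your sequential choice (petal of $A$ avoiding $K_B\cup K_C$, then $B$ avoiding $A\cup K_C$, then $C$ avoiding $A\cup B$) is only tighter bookkeeping than the paper's delete-then-select with $t=10s+10$, and gives $t=2s+1$; in a write-up, drop the exploratory intermediate orderings and keep only this final version.
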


\begin{proof}
Apply Lemma~\ref{lem:ER-sunflower} with $t=10s+10$, and let $D_0$ be the resulting bound. Choose sunflowers $\mathcal A'\subseteq\mathcal A$, $\mathcal B'\subseteq\mathcal B$ and $\mathcal C'\subseteq\mathcal C$, each with $t$ petals and with cores $K_A,K_B,K_C$. Since petals in a sunflower are pairwise disjoint, at most $2s$ petals of $\mathcal A'$ meet $K_B\cup K_C$ outside $K_A$, and similarly for the other two sunflowers. After deleting these exceptional petals, choose one remaining petal from each sunflower so that the three chosen petals are pairwise disjoint. The corresponding sets have private elements in their petals.
\end{proof}

\begin{lemma}\label{lem:three-pairs-to-trace}
For every fixed $r\ge3$ there is $D_1=D_1(r)$ with the following property. Let $x,y,z$ be distinct vertices, and let $\mathcal A,\mathcal B,\mathcal C$ be families of $r$-sets such that every $A\in\mathcal A$ contains $x,y$ but not $z$, every $B\in\mathcal B$ contains $x,z$ but not $y$, and every $C\in\mathcal C$ contains $y,z$ but not $x$. If all three families have size at least $D_1$, then some $A\in\mathcal A$, $B\in\mathcal B$, $C\in\mathcal C$ have a trace isomorphic to $C_3^3$.
\end{lemma}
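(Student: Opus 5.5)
We reduce to Lemma~\ref{lem:sunflower-selection} applied to the parts of the sets lying outside $\{x,y,z\}$. Put $\mathcal A^-=\{A\setminus\{x,y\}:A\in\mathcal A\}$, $\mathcal B^-=\{B\setminus\{x,z\}:B\in\mathcal B\}$ and $\mathcal C^-=\{C\setminus\{y,z\}:C\in\mathcal C\}$. Each is a family of $(r-2)$-subsets of $V\setminus\{x,y,z\}$, and it has the same size as the original family, because the removed pair is the same for every member. Set $s=r-2\ge1$ and $D_1=D_0(r-2)$. Lemma~\ref{lem:sunflower-selection} then gives $A\in\mathcal A$, $B\in\mathcal B$ and $C\in\mathcal C$, together with elements
\[
a\in (A\setminus\{x,y\})\setminus(B\cup C),\quad b\in (B\setminus\{x,z\})\setminus(A\cup C),\quad c\in (C\setminus\{y,z\})\setminus(A\cup B).
\]
Here $a,b,c$ lie outside $\{x,y,z\}$.

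Next we take the trace on $S=\{x,y,z,a,b,c\}$. The incidences are as follows:
\begin{itemize}
\item $x$ lies in $A$ and $B$ but not in $C$;
\item $y$ lies in $A$ and $C$ but not in $B$;
\item $z$ lies in $B$ and $C$ but not in $A$;
\item $a$ lies only in $A$, $b$ lies only in $B$, and $c$ lies only in $C$.
\end{itemize}
So $A\cap S=\{x,y,a\}$, $B\cap S=\{x,z,b\}$ and $C\cap S=\{y,z,c\}$. These are three triples with pairwise intersections $\{x\}$, $\{z\}$, $\{y\}$ and empty triple intersection. This is exactly $C_3^3$ with $T_1=A\cap S$, $T_2=B\cap S$, $T_3=C\cap S$ and $v_1=x$, $v_2=z$, $v_3=y$.

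The one point to check is that the privacy conditions from Lemma~\ref{lem:sunflower-selection} on the reduced sets carry over to the full sets. This holds because the private elements avoid $\{x,y,z\}$, and the only elements removed from $A$, $B$, $C$ lie in $\{x,y,z\}$. Hence $a\notin B^-\cup C^-$ together with $a\notin\{x,y,z\}$ gives $a\notin B\cup C$, and symmetrically for $b$ and $c$.

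The main obstacle is essentially already handled by Lemma~\ref{lem:sunflower-selection}. Note that the hypotheses force the three families to be distinct, so the chosen sets $A,B,C$ are automatically distinct. Beyond that, the proof is bookkeeping.
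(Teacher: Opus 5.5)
Your proposal is correct and follows the paper's proof: project away the distinguished pair from each family, apply Lemma~\ref{lem:sunflower-selection} with $s=r-2$, and take the trace on $\{x,y,z,a,b,c\}$. The paper leaves implicit the check that the private elements of the projected sets stay private for the full sets, since they avoid $\{x,y,z\}$; you make that check explicitly and correctly.
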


\begin{proof}
Apply Lemma~\ref{lem:sunflower-selection} with $s=r-2$ to the projected families $\{A\setminus\{x,y\}:A\in\mathcal A\}$, $\{B\setminus\{x,z\}:B\in\mathcal B\}$ and $\{C\setminus\{y,z\}:C\in\mathcal C\}$. For $D_1$ large enough, we obtain $A,B,C$ and private vertices $a\in A\setminus(B\cup C)$, $b\in B\setminus(A\cup C)$ and $c\in C\setminus(A\cup B)$. On the vertex set $\{x,y,z,a,b,c\}$, the traces of $A,B,C$ are $\{x,y,a\}$, $\{x,z,b\}$ and $\{y,z,c\}$, which form $C_3^3$.
\end{proof}

\begin{proof}[Proof of Theorem~\ref{thm:loose-triangle-trace}]
For the lower bound, fix a core $Q$ of size $r-2$ and take all edges $Q\cup X$, where $X$ ranges over all pairs from the remaining vertices. This gives $\Theta_r(n^2)$ edges. If a trace avoids $Q$, then every trace edge has size at most two; if it meets $Q$, then all trace edges have a common vertex. In either case the trace cannot be $C_3^3$.

For the upper bound, let $D_1=D_1(r)$ be given by Lemma~\ref{lem:three-pairs-to-trace}, and let $C=C(D_1,r)$ be given by Lemma~\ref{lem:nuclear}. Starting with an $r$-graph $\mathcal F$, repeatedly delete every edge containing a pair whose current positive codegree is smaller than $C$. Each pair is responsible for at most $(C-1)$ deleted edges, so fewer than $(C-1)\binom n2$ edges are deleted. Thus, if $|\mathcal F|$ is larger than a sufficiently large constant times $n^2$, a non-empty $C$-nuclear subhypergraph remains. By Lemma~\ref{lem:nuclear}, there are vertices $x,y,z$ such that the three families of edges containing $xy$ but not $z$, containing $xz$ but not $y$, and containing $yz$ but not $x$ each have size at least $D_1$. Lemma~\ref{lem:three-pairs-to-trace} then gives a trace copy of $C_3^3$.
\end{proof}

\subsection* {Concluding remarks}

The main open problem is to determine the order of $\extr(n,\VD_k)$. We expect the lower bound $\Om_k(n^{2^{k-1}-1})$ to be sharp, in agreement with the Anstee-Sali prediction. The proof of Theorem~\ref{thm:main-trace} suggests a more local problem: determine how many partial Venn regions can be forced at a given exponent. In the present argument, when passing from $k$ to $k+1$ sets, two additional non-outside regions are obtained for free. Any systematic improvement in this number would immediately improve the exponent in Theorem~\ref{thm:main-trace}.

The fixed-uniformity version also seems interesting in its own right. Even in the natural uniformity $2^{k-1}$, closing the gap between the construction in Theorem~\ref{thm:uniform-lower} and proving a stronger upper bound would require substantially more structure. 
\subsubsection*{Acknowledgements}
This project has received funding from the European Union's Horizon 2020 research and innovation programme under the Marie Sk\l{}odowska-Curie grant agreement No.~823748. It was carried out during the DIMACS REU in 2024 under the guidance of Bhargav Narayanan and Milan Haiman. We thank them for many helpful meetings and discussions.
\subsubsection*{Declaration of AI use}
The authors used OpenAI's ChatGPT for proofreading and stylistic improvements of the text. All the mathematical ideas and proofs are due to the authors.
\bibliographystyle{abbrv}
\bibliography{venn_diagrams_references}

@article{AnsteeSaliSurvey,
  author  = {Richard P. Anstee and Attila Sali},
  title   = {A survey of forbidden configuration results},
  journal = {The Electronic Journal of Combinatorics},
  year    = {2013},
  pages   = {Dynamic Survey DS20},
  note    = {Updated version, March 14, 2025},
  doi     = {10.37236/2379}
}

@article{CsakanyKahn,
  author  = {Rita Cs{\'a}k{\'a}ny and Jeff Kahn},
  title   = {A homological approach to two problems on finite sets},
  journal = {Journal of Algebraic Combinatorics},
  volume  = {9},
  number  = {2},
  pages   = {141--149},
  year    = {1999},
  doi     = {10.1023/A:1018630111976}
}

@article{ErdosTuran,
  author  = {Paul Erd{\H{o}}s},
  title   = {On extremal problems of graphs and generalized graphs},
  journal = {Israel Journal of Mathematics},
  volume  = {2},
  pages   = {183--190},
  year    = {1964}
}

@article{ErdosRado,
  author  = {Paul Erd{\H{o}}s and Richard Rado},
  title   = {Intersection theorems for systems of sets},
  journal = {Journal of the London Mathematical Society},
  volume  = {35},
  pages   = {85--90},
  year    = {1960}
}

@article{FranklFuredi,
  author  = {P{\'e}ter Frankl and Zolt{\'a}n F{\"u}redi},
  title   = {Exact solution of some {Tur{\'a}n}-type problems},
  journal = {Journal of Combinatorial Theory, Series A},
  volume  = {45},
  number  = {2},
  pages   = {226--262},
  year    = {1987},
  doi     = {10.1016/0097-3165(87)90016-1}
}

@article{FurediJiang,
  author  = {Zolt{\'a}n F{\"u}redi and Tao Jiang},
  title   = {Hypergraph {Tur{\'a}n} numbers of linear cycles},
  journal = {Journal of Combinatorial Theory, Series A},
  volume  = {123},
  pages   = {252--270},
  year    = {2014},
  doi     = {10.1016/j.jcta.2013.12.009}
}

@inproceedings{GuptaLeeLi,
  author    = {Anupam Gupta and Euiwoong Lee and Jason Li},
  title     = {The number of minimum {$k$}-cuts: Improving the {Karger--Stein} bound},
  booktitle = {Proceedings of the 51st Annual ACM SIGACT Symposium on Theory of Computing},
  series    = {STOC 2019},
  pages     = {229--240},
  publisher = {ACM},
  year      = {2019},
  doi       = {10.1145/3313276.3316395}
}

@article{KLLW,
  author  = {Peter Keevash and Imre Leader and Jason Long and Adam Zsolt Wagner},
  title   = {The extremal number of {Venn} diagrams},
  journal = {Proceedings of the American Mathematical Society},
  volume  = {154},
  pages   = {3281--3293},
  year    = {2026},
  doi     = {10.1090/proc/15402}
}

@article{PerlesShelah,
  author  = {Saharon Shelah},
  title   = {A combinatorial problem; stability and order for models and theories in infinitary languages},
  journal = {Pacific Journal of Mathematics},
  volume  = {41},
  number  = {1},
  pages   = {247--261},
  year    = {1972}
}

@article{Raggi,
  author  = {Miguel Raggi},
  title   = {Forbidden configurations: Finding the number predicted by the {Anstee--Sali} conjecture is {NP}-hard},
  journal = {Ars Mathematica Contemporanea},
  volume  = {10},
  number  = {1},
  pages   = {1--8},
  year    = {2016},
  doi     = {10.26493/1855-3974.438.300}
}

@article{Sauer,
  author  = {Norbert Sauer},
  title   = {On the density of families of sets},
  journal = {Journal of Combinatorial Theory, Series A},
  volume  = {13},
  number  = {1},
  pages   = {145--147},
  year    = {1972}
}

@article{VC,
  author  = {Vladimir N. Vapnik and Alexey Ya. Chervonenkis},
  title   = {On the uniform convergence of relative frequencies of events to their probabilities},
  journal = {Theory of Probability and Its Applications},
  volume  = {16},
  number  = {2},
  pages   = {264--280},
  year    = {1971}
}
\end{document}